\title{Axiomatic Foundations of Fractal Analysis and Fractal Number Theory}
\author{Stanislav Semenov \\
\href{mailto:stas.semenov@gmail.com}{stas.semenov@gmail.com} \\
\href{https://orcid.org/0000-0002-5891-8119}{ORCID: 0000-0002-5891-8119}}
\date{April 2, 2025}
\theoremstyle{definition}
\newtheorem{definition}{Definition}[section]
\newtheorem{example}{Example}[section]
\newtheorem{conjecture}[definition]{Conjecture}
\theoremstyle{plain}
\newtheorem{axiom}{Axiom}[section]
\newtheorem{theorem}[definition]{Theorem}
\newtheorem{corollary}[definition]{Corollary}
\newtheorem{principle}{Principle}[section]
\theoremstyle{remark}
\newtheorem*{remark}{Remark}
\begin{document}

\maketitle

\begin{abstract}
We develop an axiomatic framework for \emph{fractal analysis} and \emph{fractal number theory} grounded in hierarchies of definability. Central to this approach is a sequence of formal systems \( \mathcal{F}_n \), each corresponding to a definability level \( S_n \subseteq \mathbb{R} \) of constructively accessible mathematical objects. This structure refines classical analysis by replacing uncountable global constructs with countable, syntactically constrained approximations.

The axioms formalize:
\begin{itemize}
    \item A hierarchy of definability levels \( S_n \), indexed by syntactic and ordinal complexity;
    \item Fractal topologies and the induced notions of continuity, compactness, and differentiability;
    \item Layered integration and differentiation with explicit convergence and definability bounds;
    \item Arithmetic and function spaces over the stratified continuum \( \mathbb{R}_{S_n} \subseteq \mathbb{R} \).
\end{itemize}

This framework synthesizes constructive mathematics, proof-theoretic stratification, and fractal geometric intuition into a unified, finitistically structured model. Key results include the definability-based classification of real numbers (e.g., algebraic, computable, Liouville), a stratified fundamental theorem of calculus with syntactic error bounds, and compatibility with base systems such as \( \mathsf{RCA}_0 \) and \( \mathsf{ACA}_0 \). 

The framework enables constructive approximation and syntactic regularization of classical analysis, with applications to proof assistants, computable mathematics, and foundational studies of the continuum.
\end{abstract}

\subsection*{Mathematics Subject Classification}
03F60 (Constructive and recursive analysis), 26E40 (Constructive analysis), 03F03 (Proof theory and constructive mathematics)

\subsection*{ACM Classification}
F.4.1 Mathematical Logic, F.1.1 Models of Computation

\section*{Introduction}
\label{sec:intro}

The foundational crisis of the early 20th century revealed fundamental tensions between the principal interpretations of the mathematical continuum:

\begin{itemize}
\item The \textit{classical continuum}, as formalized in Zermelo–Fraenkel set theory with the Axiom of Choice (ZFC), which permits non-constructive existence proofs and impredicative definitions.
\item \textit{Constructive approaches}, such as Brouwer’s intuitionism and Bishop’s constructive analysis, which reject the law of excluded middle and require all mathematical objects to have computational content.
\item \textit{Computational realizability frameworks}, such as Turing machines and Type-2 effectivity (TTE), which seek to ground mathematical structures in algorithmic representations.
\end{itemize}

Despite significant advances in each direction, there remains no unified framework that simultaneously preserves constructive rigor, syntactic tractability, and topological expressiveness. This paper introduces such a framework via the concept of \emph{fractal definability}—a hierarchical model of real numbers and analysis stratified by syntactic complexity and proof-theoretic strength.

\paragraph{Core Contributions}

We propose a countably stratified continuum \( \mathbb{R}_{S_\omega} = \bigcup_{n \in \mathbb{N}} \mathbb{R}_{S_n} \), in which each layer \( \mathbb{R}_{S_n} \) represents the class of real numbers constructively definable at a bounded level of logical and computational complexity. The resulting system offers:

\begin{enumerate}
\item A constructive continuum with fine-grained control over definability, extending beyond monolithic models such as the computable reals.
\item Layer-relative versions of analytical concepts—continuity, compactness, differentiability, integration—defined relative to \( S_n \)-topologies and approximation mechanisms.
\item A syntactic arithmetic hierarchy of real numbers that explicitly distinguishes algebraic, computable transcendental, and non-computable reals through definitional complexity.
\end{enumerate}

This framework is formalized axiomatically, with distinct systems governing:

\begin{itemize}
\item \textbf{Fractal definability}: axioms characterizing definable sets, functions, and reals at level \( S_n \).
\item \textbf{Fractal topology}: open sets and limit processes defined over \( S_n \)-intervals.
\item \textbf{Fractal arithmetic and calculus}: field operations, derivatives, and integrals defined within stratified layers.
\item \textbf{Fractal number theory}: a structured classification of real numbers based on their definability properties.
\end{itemize}

\paragraph{Philosophical Motivation}

Constructivity is not inherently binary; rather, it admits degrees of definitional power. Our framework models the continuum not as a single undivided totality but as a union of stratified layers of constructive accessibility. Each level reflects a bounded syntactic universe, giving rise to a \textit{fractal continuum}—a constructively grounded analogue of the classical real line, nuanced by definability bounds.

\paragraph{Related Work}

\begin{itemize}
    \item \textbf{Bishop's constructive analysis} \cite{Bishop1967,Bridges1986} provides a minimal classical-free foundation for analysis, but treats definability uniformly, without internal complexity gradation.
    \item \textbf{Reverse mathematics} \cite{Simpson2009} calibrates the logical strength of theorems via subsystems like \( \mathsf{RCA}_0 \), \( \mathsf{ACA}_0 \), etc., yet focuses on theorems rather than definability of individual objects.
    \item \textbf{Computable analysis} \cite{Weihrauch2000} formalizes computation over real numbers via effective procedures, but lacks a syntactic hierarchy of constructive depth.
    \item \textbf{Descriptive set theory} offers pointclass hierarchies and topological classifications, though often within classical logic and with no intrinsic resource bounds.
\end{itemize}

The present system synthesizes these traditions by embedding definability constraints directly into the structure of real analysis, yielding a layered continuum compatible with both constructive reasoning and computational analysis.

This framework extends ideas introduced in earlier work on fractal countability and constructive alternatives to classical set-theoretic hierarchies \cite{Semenov2025FractalAnalysis, Semenov2025FractalBoundaries, Semenov2025FractalCountability}.

\section{Preliminaries}
\label{sec:prelim}

We assume standard familiarity with the following foundational topics:

\begin{itemize}
    \item Ordinal notations and the arithmetical hierarchy, as developed in classical recursion theory \cite{SoareRecursion}
    \item Constructive real analysis in the sense of Bishop and Bridges \cite{Bishop1967,Bridges1986}
    \item Proof-theoretic subsystems of second-order arithmetic, such as $\mathsf{RCA}_0$, $\mathsf{ACA}_0$, and related frameworks in reverse mathematics \cite{Simpson2009}
\end{itemize}

Throughout this paper, we adopt a stratified framework in which definability is layered by a hierarchy of formal systems \( \mathcal{F}_n \), indexed by natural numbers \( n \in \mathbb{N} \). Each level \( \mathcal{F}_n \) governs a corresponding class \( S_n \) of constructively definable mathematical objects, bounded in complexity by a parameterized resource function.

\subsection*{Key Notations and Definitions}

\begin{itemize}
    \item \( \mathcal{F}_n \): A formal system at level \( n \), allowing definitions of functions and sets using bounded syntactic complexity. The class of functions definable in \( \mathcal{F}_n \) is assumed to be total and computable within a specified resource bound \( \rho_n(k) \), where \( k \) is the input size and \( \rho_n \) is a tunable parameter (e.g., \( \rho_n(k) = \exp_n(k) \), \( k! \), etc.).

    \item \( S_n \): The definability level associated with \( \mathcal{F}_n \), containing all objects (natural numbers, rationals, functions, sets) that admit constructive definitions in \( \mathcal{F}_n \).

    \item \( \mathbb{Q}_{S_n} := \mathbb{Q} \cap S_n \): The set of rational numbers whose finite representations (as reduced fractions \( p/q \)) are definable within \( \mathcal{F}_n \).

    \item \( \mathbb{R}_{S_n} \): The set of \emph{fractal reals} at level \( n \), defined as real numbers that admit rapidly converging sequences from \( \mathbb{Q}_{S_n} \) with convergence governed by a definable modulus in \( \mathcal{F}_n \). This definition will be formalized in Section~\ref{sec:fractal-reals}.

    \item \( \mathcal{T}_n \): The $S_n$-induced topology on \( \mathbb{R} \), generated by basic open intervals with endpoints in \( \mathbb{Q}_{S_n} \). This topology governs continuity and compactness relative to definability constraints.
\end{itemize}

\section{Fractal Real Numbers}
\label{sec:fractal-reals}

We define the set of fractal reals at level \( n \) using a constructively stratified version of the Cauchy completion of rationals:

\begin{definition}[Fractal Reals]
\label{def:fractal-reals}
A real number \( x \in \mathbb{R} \) belongs to \( \mathbb{R}_{S_n} \) if and only if there exists a total function
\[
f \colon \mathbb{N} \to \mathbb{Q}_{S_n}
\]
such that:
\[
\forall k \in \mathbb{N},\quad |x - f(k)| < 2^{-k},
\]
and the function \( f \) is definable in the formal system \( \mathcal{F}_n \).
\end{definition}

This generalizes the Bishop-style definition of reals as rapidly converging rational sequences, by parameterizing the construction through definability within a bounded syntactic layer \( \mathcal{F}_n \). The result is a stratified family of constructive subfields of \( \mathbb{R} \), where definability replaces mere computability or intuitionistic provability.

\begin{remark}
The above definition is equivalent to taking the closure of \( \mathbb{Q}_{S_n} \) under the $S_n$-topology \( \mathcal{T}_n \), once the topology is defined via basic open intervals with \( S_n \)-definable endpoints. This duality between topological closure and syntactic convergence enables both algebraic and analytic treatments of \( \mathbb{R}_{S_n} \).
\end{remark}

\subsection*{Stratification and Constructive Closure}

The fractal real sets form an increasing sequence:
\[
\mathbb{R}_{S_0} \subseteq \mathbb{R}_{S_1} \subseteq \cdots \subseteq \mathbb{R}_{S_n} \subseteq \mathbb{R}_{S_{n+1}} \subseteq \cdots,
\]
with strict inclusion in general for \( n \geq 0 \). Their union defines the total constructive closure of the real continuum within this framework:
\[
\mathbb{R}_{S_\omega} := \bigcup_{n \in \mathbb{N}} \mathbb{R}_{S_n}.
\]

This stratified approach allows fine-grained control over computability, proof-theoretic strength, and syntactic complexity. It lays the foundation for a definability-sensitive treatment of analysis, number theory, and topology, enabling layered versions of classical theorems under restricted constructive resources.

\section{Axioms of Fractal Definability}
\label{sec:axioms-def}

\begin{axiom}[Initial Basis]
\label{ax:initial-basis}
The base formal system \( \mathcal{F}_0 \) is any finite syntactic core sufficient to define basic arithmetic expressions and derivations. The choice of \( \mathcal{F}_0 \) determines the initial layer \( S_0 \subset \mathbb{R}_{S_\omega} \).
\end{axiom}

\begin{axiom}[Conservative Hierarchy of Formal Systems]
\label{ax:hierarchy-formal-systems}
There exists an increasing sequence of formal systems \( \{ \mathcal{F}_n \}_{n \in \mathbb{N}} \) such that for each \( n \in \mathbb{N} \):
\begin{itemize}
    \item \( \mathcal{F}_0 \) is a fixed syntactic base (e.g., \( \mathsf{RCA}_0 \)) adequate for expressing primitive recursive arithmetic and basic constructions over \( \mathbb{Q} \);
    \item \( \mathcal{F}_{n+1} \) is a conservative syntactic extension of \( \mathcal{F}_n \), with strictly increased definitional power;
    \item Each \( \mathcal{F}_n \) proves all true \(\Sigma^0_n\)-statements provable in \( \mathcal{F}_{n-1} \) and admits uniform definitions for all total functions in the class \( \Delta^0_n \);
    \item The language of \( \mathcal{F}_n \) supports internal syntactic definitions of all objects in \( S_n \), including rational approximations, functions \( \mathbb{N} \to \mathbb{Q} \), and convergence witnesses;
    \item The proof-theoretic ordinal of \( \mathcal{F}_n \) does not exceed \( \omega^n \).
\end{itemize}
\end{axiom}

\begin{axiom}[Extended Hierarchy of Formal Systems]
\label{ax:extended-hierarchy}
There exists an increasing sequence of formal systems \( \{ \mathcal{F}_n \}_{n \in \mathbb{N}} \) satisfying the following:

\begin{itemize}
    \item \( \mathcal{F}_0 \) is a fixed syntactic base system (e.g., \( \mathsf{RCA}_0 \) or a finitely axiomatized fragment of arithmetic) sufficient to define basic arithmetic and rational constructions;
    
    \item For each \( n \), the system \( \mathcal{F}_{n+1} \) is a consistent syntactic extension of \( \mathcal{F}_n \), admitting strictly greater definitional power;
    
    \item Each \( \mathcal{F}_n \) supports:
    \begin{itemize}
        \item internal definitions of Cauchy sequences, rational functions, and convergence witnesses;
        \item uniform definitions for total functions in a syntactic class \( \Delta_n \subseteq \text{Def}_n \), where \( \text{Def}_n \) denotes the syntactic class of definitions available in \( \mathcal{F}_n \);
    \end{itemize}
    
    \item The proof-theoretic ordinal of \( \mathcal{F}_n \) is bounded by a function \( \alpha(n) \), where \( \alpha \colon \mathbb{N} \to \mathrm{Ord} \) is strictly increasing and may be unbounded in principle;
    
    \item The induced definability layers \( S_n \subseteq \mathbb{R} \) consist of all real numbers whose rational approximations and convergence proofs are expressible within \( \mathcal{F}_n \).
\end{itemize}
\end{axiom}

\begin{axiom}[Hierarchy of Definability]
\label{ax:hierarchy-definability}
For each natural number $n \in \mathbb{N}$, there exists a definability level $S_n$, consisting of all constructive objects and functions that are definable in finite time with ordinal complexity bounded by $n$. That is, $S_n$ contains precisely those objects whose definitions can be carried out by a formal system whose proof-theoretic ordinal does not exceed $\omega^n$.
\end{axiom}

\begin{axiom}[Inclusion and Closure]
\label{ax:inclusion}
For all $n \in \mathbb{N}$, the definability levels satisfy $S_n \subseteq S_{n+1}$, with strict inclusion for $n \geq 1$. The union over all levels forms the total constructive closure:
\[
S_\omega := \bigcup_{n \in \mathbb{N}} S_n.
\]
This set $S_\omega$ represents the full constructive universe: the closure of all finitely stratified definable objects under layer-wise approximation.
\end{axiom}

\begin{axiom}[Definability of Objects]
\label{ax:definability-objects}
A real number \( x \in \mathbb{R} \) belongs to the definability level \( S_n \) if and only if there exists a finite constructive specification of \( x \)---such as a computable function, formal term, or syntactic expression---that is derivable within the formal system \( \mathcal{F}_n \). That is, \( x \in S_n \) if and only if \( x \) is internally representable in \( \mathcal{F}_n \).
\end{axiom}

\begin{example}
The membership of a real number in some definability level $S_n$ depends on the choice of the base formal system $\mathcal{F}_0$. For example, if $\mathcal{F}_0 = \mathsf{RCA}_0$ \cite{Simpson2009}, then:
\begin{itemize}
    \item $\sqrt{2} \in S_1$, since it is an algebraic number definable via a polynomial with rational coefficients, and root-finding for such equations is available in $\mathsf{ACA}_0 \supset \mathcal{F}_1$;
    \item $e \in S_2$, as it is definable via a convergent Taylor expansion, whose convergence can be verified within a system allowing effective real analysis (e.g., $\mathsf{ACA}_0$ or a fragment with primitive limits);
    \item Chaitin's constant $\Omega \notin S_\omega$, because it is not computably approximable and encodes the halting probability of a universal Turing machine \cite{Chaitin1975}, requiring non-arithmetical comprehension.
\end{itemize}
Thus, the hierarchy $\{S_n\}$ is relative to the initial choice of $\mathcal{F}_0$ and grows by adding definability power step by step \cite{Semenov2025FractalBoundaries}.
\end{example}

\section{Axioms of Fractal Topology}
\label{sec:fractal-topology}

We define a stratified topological structure \( \mathcal{T}_n \) on each level \( \mathbb{R}_{S_n} \), induced by the definability bounds of \( S_n \), as proposed in \cite{Semenov2025FractalBoundaries,Semenov2025FractalCountability}.

\begin{axiom}[Definable Topology]
Each level \( S_n \) induces a topology \( \mathcal{T}_n \) on \( \mathbb{R}_{S_n} \), generated by a basis of open intervals \( (a, b) \), where:
\begin{itemize}
  \item \( a, b \in \mathbb{Q}_{S_n} \), \( a < b \),
  \item The mapping \( (a, b) \mapsto \{x \in \mathbb{R}_{S_n} \mid a < x < b\} \) is computable in \( \mathcal{F}_n \).
\end{itemize}
The resulting topology \( \mathcal{T}_n \) is second-countable and definably generated, with a countable basis effectively enumerable in \( \mathcal{F}_n \).
\end{axiom}

\begin{axiom}[Constructive Openness]
\label{ax:constructive-openness}
A set \( U \subseteq \mathbb{R}_{S_n} \) is open in the topology \( \mathcal{T}_n \) if and only if there exists a total \( \mathcal{F}_n \)-definable function
\[
f \colon \mathbb{N} \to \mathbb{Q}_{S_n} \times \mathbb{Q}_{S_n}
\]
such that for all \( k \in \mathbb{N} \), \( f(k) = (a_k, b_k) \) with \( a_k < b_k \), and:
\[
U = \bigcup_{k \in \mathbb{N}} (a_k, b_k),
\]
where the enumeration \( \{(a_k, b_k)\}_{k \in \mathbb{N}} \) is effective: that is,
\begin{itemize}
    \item Each pair \( (a_k, b_k) \) lies in \( \mathbb{Q}_{S_n} \times \mathbb{Q}_{S_n} \), with \( a_k < b_k \),
    \item The membership condition \( x \in U \) is semi-decidable relative to \( \mathcal{F}_n \): for any \( x \in \mathbb{R}_{S_n} \), there exists a computable search procedure (in \( \mathcal{F}_n \)) that finds \( k \) such that \( x \in (a_k, b_k) \),
    \item The convergence of the sequence \( \{(a_k, b_k)\} \) is not required; only that it covers \( U \).
\end{itemize}
\end{axiom}

\begin{definition}[Effectively Open Set in \( \mathcal{T}_n \)]
\label{def:effectively-open}
A set \( U \subseteq \mathbb{R}_{S_n} \) is said to be \emph{effectively open} at level \( n \) if there exists a total \( \mathcal{F}_n \)-definable function
\[
f \colon \mathbb{N} \to \mathbb{Q}_{S_n} \times \mathbb{Q}_{S_n}
\]
such that for each \( k \in \mathbb{N} \), \( f(k) = (a_k, b_k) \) with \( a_k < b_k \), and:
\[
U = \bigcup_{k \in \mathbb{N}} (a_k, b_k).
\]

Additionally, the enumeration is effective in the following sense:
\begin{itemize}
    \item The function \( f \) is total and its graph is computable in \( \mathcal{F}_n \).
    \item For any \( x \in \mathbb{R}_{S_n} \), the membership condition \( x \in U \) is semi-decidable within \( \mathcal{F}_n \), i.e., there exists a computable search procedure that finds \( k \) such that \( x \in (a_k, b_k) \).
\end{itemize}
\end{definition}

\begin{definition}[Effectively Closed Set in \( \mathcal{T}_n \)]
\label{def:effectively-closed}
A set \( C \subseteq \mathbb{R}_{S_n} \) is said to be \emph{effectively closed} at level \( n \) if its complement \( \mathbb{R}_{S_n} \setminus C \) is effectively open in \( \mathcal{T}_n \), i.e., there exists a total \( \mathcal{F}_n \)-definable function
\[
f \colon \mathbb{N} \to \mathbb{Q}_{S_n} \times \mathbb{Q}_{S_n}
\]
such that:
\[
\mathbb{R}_{S_n} \setminus C = \bigcup_{k \in \mathbb{N}} (a_k, b_k), \quad \text{where } f(k) = (a_k, b_k),\ a_k < b_k.
\]

Equivalently, a point \( x \in \mathbb{R}_{S_n} \) belongs to \( C \) if and only if:
\[
\forall k \in \mathbb{N},\ x \notin (a_k, b_k),
\]
and this verification can be carried out constructively in \( \mathcal{F}_n \).
\end{definition}

\begin{definition}[Effectively Compact Set in \( \mathcal{T}_n \)]
\label{def:effectively-compact}
A set \( K \subseteq \mathbb{R}_{S_n} \) is \emph{effectively compact} at level \( n \) if every effectively open cover of \( K \) admits a finite subcover that is computable in \( \mathcal{F}_n \).

More precisely: for every \( \mathcal{F}_n \)-definable function
\[
f \colon \mathbb{N} \to \mathbb{Q}_{S_n} \times \mathbb{Q}_{S_n}, \quad f(k) = (a_k, b_k),\ a_k < b_k,
\]
such that \( K \subseteq \bigcup_{k \in \mathbb{N}} (a_k, b_k) \), there exists a finite set \( \{k_1, \dots, k_m\} \subset \mathbb{N} \) computable in \( \mathcal{F}_n \), such that:
\[
K \subseteq \bigcup_{j = 1}^m (a_{k_j}, b_{k_j}).
\]
\end{definition}

\begin{axiom}[Fractal Continuity]
\label{ax:fractal-continuity}
A function \( f: \mathbb{R}_{S_n} \rightarrow \mathbb{R}_{S_n} \) is continuous at level \( S_n \) if for every open set \( U \in \mathcal{T}_n \), the preimage \( f^{-1}(U) \in \mathcal{T}_n \).
\end{axiom}

\begin{definition}[Effective Continuity on Effectively Compact Sets]
\label{def:effective-continuity-compact}
Let \( K \subseteq \mathbb{R}_{S_n} \) be an effectively compact set, and let \( f \colon K \to \mathbb{R}_{S_n} \) be a total function.

We say that \( f \) is \emph{effectively continuous on \( K \)} if there exists a total \( \mathcal{F}_n \)-definable function
\[
\delta \colon \mathbb{Q}_{S_n}^+ \to \mathbb{Q}_{S_n}^+
\]
such that for all \( \varepsilon \in \mathbb{Q}_{S_n}^+ \), and all \( x, y \in K \),
\[
|x - y| < \delta(\varepsilon) \ \Rightarrow\ |f(x) - f(y)| < \varepsilon.
\]

That is, the modulus of continuity \( \delta \) is computable in \( \mathcal{F}_n \), and uniformly controls the variation of \( f \) on \( K \).
\end{definition}

\begin{axiom}[Fractal Convergence]
\label{ax:fractal-convergence}
A sequence \( (x_k) \subseteq \mathbb{R}_{S_n} \) converges to \( x \in \mathbb{R}_{S_n} \) in \( \mathcal{T}_n \) if:
\[
\forall \varepsilon \in \mathbb{Q}_{S_n}^+, \exists N \in \mathbb{N}, \forall k \geq N,\quad |x_k - x| < \varepsilon,
\]
and the function \( k \mapsto x_k \) is definable in \( \mathcal{F}_n \).
\end{axiom}

\begin{axiom}[Effective Hausdorff Separation]
\label{ax:hausdorff}
The topology \( \mathcal{T}_n \) is Hausdorff, and the separation is effective \cite{Weihrauch2000,PourElRichards1989}: 
for any distinct \( x, y \in \mathbb{R}_{S_n} \), there exist disjoint basic open sets \( (a, b), (c, d) \in \mathcal{T}_n \) such that:
\begin{itemize}
    \item \( x \in (a, b) \), \( y \in (c, d) \),
    \item The endpoints \( a, b, c, d \in \mathbb{Q}_{S_n} \) are computable in \( \mathcal{F}_n \) from \( x \) and \( y \).
\end{itemize}
\end{axiom}

\begin{theorem}[Effective Minimum on Effectively Compact Sets]
\label{thm:min-attainment}
Let \( K \subseteq \mathbb{R}_{S_n} \) be a non-empty, effectively compact set, and let \( f \colon K \to \mathbb{R}_{S_n} \) be effectively continuous in the sense of Definition~\ref{def:effective-continuity-compact}.

Then there exists a point \( x^* \in K \) such that
\[
f(x^*) = \min_{x \in K} f(x),
\]
(following the constructive minimum principle of \cite{Bishop1967,Bridges1986}) and this minimizer \( x^* \) is effectively approximable in \( \mathcal{F}_n \), i.e., there exists a total \( \mathcal{F}_n \)-definable function \( g \colon \mathbb{N} \to \mathbb{Q}_{S_n} \) such that \( |x^* - g(k)| < 2^{-k} \) for all \( k \in \mathbb{N} \).
\end{theorem}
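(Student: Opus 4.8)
The plan is to follow the classical extreme-value strategy—first the existence of the infimum, then the location of a minimizer—while tracking $\mathcal{F}_n$-definability at every step, using effective compactness to produce finite nets and the modulus $\delta$ to convert net-minima into genuine approximations of the infimum.

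First I would extract finite nets from effective compactness. Applying Definition~\ref{def:effectively-compact} to the effectively open cover of $\mathbb{R}_{S_n}$ by unit intervals yields a finite subcover, hence a bounding interval $[A,B] \supseteq K$ with $A,B \in \mathbb{Q}_{S_n}$. For each $m \in \mathbb{N}$, I would then cover $K$ by the effectively open family $\{(q - 2^{-m-1},\, q + 2^{-m-1}) : q \in G_m\}$, where $G_m \subseteq \mathbb{Q}_{S_n}$ is a computable finite grid of spacing $2^{-m-1}$ spanning $[A,B]$; effective compactness again returns a finite subcover computable in $\mathcal{F}_n$, and choosing one point of $K$ from each selected interval gives a finite $2^{-m}$-net $P_m = \{p^{(m)}_1,\dots,p^{(m)}_{N_m}\} \subseteq K$, definable in $\mathcal{F}_n$. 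Evaluating $f$ on $P_m$ and setting $\mu_m := \min_i f(p^{(m)}_i)$, effective continuity (Definition~\ref{def:effective-continuity-compact}) guarantees that a $\delta(\varepsilon)$-net forces $\mu_m \le \mu + \varepsilon$, where $\mu := \inf_{x\in K} f(x)$; since trivially $\mu \le \mu_m$, the sequence $(\mu_m)$ can be arranged to satisfy $|\mu_m - \mu| < 2^{-m}$. As each $\mu_m \in \mathbb{Q}_{S_n}$ is computed in $\mathcal{F}_n$, this already exhibits $\mu \in \mathbb{R}_{S_n}$ via Definition~\ref{def:fractal-reals}, and en passant shows $f$ is bounded below.

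Next I would locate a minimizer by a nested-interval search. The plan is to maintain intervals $[A_k,B_k]$ with $A_k,B_k \in \mathbb{Q}_{S_n}$, $[A_{k+1},B_{k+1}] \subseteq [A_k,B_k]$, $B_k - A_k \le 2^{-k}$, and the invariant that $\inf_{x \in K \cap [A_k,B_k]} f(x)$ stays within a controlled margin of $\mu$. At stage $k$, subdivide $[A_k,B_k]$ into finitely many subintervals of length $\le 2^{-k-1}$; to each subinterval $I$ apply the net machinery above (restricting nets to $K \cap \overline{I}$) to compute a rational approximation $v_I$ of $\inf_{x \in K \cap I} f(x)$; then select the leftmost subinterval whose approximate infimum lies below the rational threshold $\mu_k + 3\cdot 2^{-k-1}$. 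Taking $x^* := \lim_k A_k$, completeness gives $x^* \in \mathbb{R}_{S_n}$ with witness $g(k) := A_k$ satisfying $|x^* - g(k)| < 2^{-k}$; effective Hausdorffness (Axiom~\ref{ax:hausdorff}) makes $K$ effectively closed, so $x^* \in K$; and effective continuity forces $f(x^*) = \mu$, so $x^*$ is the sought minimizer.

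The main obstacle is the selection step, where the constructive failure of exact attainment and the undecidability of real comparisons surface: one cannot decide which subinterval contains a true minimizer, nor test $f(x) = \mu$ directly. The resolution is to never compare reals for equality—the threshold $\mu_k + 3\cdot 2^{-k-1}$ is a concrete rational built from the already-computed $\mu_k$, so the test $v_I < \mu_k + 3\cdot 2^{-k-1}$ is a strict, semi-decidable comparison in $\mathcal{F}_n$; by the bound $|\mu_m - \mu| < 2^{-m}$ at least one subinterval always passes, so the leftmost passing subinterval is well-defined and $\mathcal{F}_n$-computable, while the margin is engineered to propagate the invariant so that $\bigcap_k [A_k,B_k]$ pins down a point of $f$-value exactly $\mu$. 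The delicate part of writing this rigorously is verifying that the constants in the threshold make the invariant self-sustaining across refinement while keeping every comparison strict and every object definable within $\mathcal{F}_n$; this is where I would spend the bulk of the careful estimation.
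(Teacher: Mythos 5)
Your first stage (finite nets from effective compactness, net-minima $\mu_m$ controlled by the modulus $\delta$, hence $\inf_K f \in \mathbb{R}_{S_n}$) matches the paper's sketch, and your second stage is a genuine improvement in intent: the paper simply sets $x^* := \lim_k x_k$ for near-minimizers $x_k$ on successive nets, a sequence that has no reason to converge, whereas you correctly identify that some forcing device is needed and propose a nested-interval search. Unfortunately the device you propose does not close the gap; it relocates it into the selection invariant, and there it cannot be repaired.

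The problem is the one you flag yourself but defer: the constants cannot be made self-sustaining. To guarantee that at stage $k+1$ at least one subinterval of $[A_k,B_k]$ passes the test $v_I < \mu_{k+1} + \tau_{k+1}$, the threshold $\tau_{k+1}$ must dominate the gap $\inf_{K\cap[A_k,B_k]} f - \mu$ already incurred, and that gap is bounded only by the sum of all earlier thresholds and approximation errors. So you need $\tau_{k+1} \gtrsim \sum_{j\le k}\tau_j$, which forces the thresholds to grow, while $f(x^*)=\mu$ requires the accumulated loss to tend to $0$. These demands are incompatible, and the incompatibility is not an artifact of your bookkeeping: for $f(x)=ax$ on $K=[-1,1]$ with $a$ a small real of undetermined sign, every subinterval passes the early (coarse) tests, your leftmost rule commits to neighborhoods of $-1$, and if later $a<0$ is revealed the true minimizer sits at $+1$ outside $[A_k,B_k]$ --- no subinterval passes and the invariant breaks. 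This is the standard Brouwerian counterexample showing that \emph{attainment} of the infimum (as opposed to its existence as a real number, which your first stage does establish and which is all that Bishop--Bridges actually provide) is not constructively derivable from effective compactness and a modulus of continuity alone; some additional hypothesis (e.g.\ that $f$ has at most one minimizer, or an appeal to $\mathcal{F}_{n+1}$-resources) is needed. Secondary, shared weaknesses: neither you nor the paper explains how a finite subcover yields actual points of $K$ in each interval, how inhabitedness of $K\cap I$ is decided when restricting nets to subintervals, or why $x^*\in K$ (effective Hausdorffness does not make $K$ effectively closed in the sense of Definition~\ref{def:effectively-closed}).
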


\begin{proof}[Sketch of Proof]
Since \( K \) is effectively compact, there exists a total \( \mathcal{F}_n \)-definable function \( h \colon \mathbb{N} \to \mathbb{Q}_{S_n} \times \mathbb{Q}_{S_n} \) producing finite covers of \( K \) by open intervals of any desired radius \( \varepsilon > 0 \).

By effective continuity of \( f \), there exists a computable modulus \( \delta(\varepsilon) \in \mathbb{Q}_{S_n}^+ \) such that:
\[
|x - y| < \delta(\varepsilon) \Rightarrow |f(x) - f(y)| < \varepsilon.
\]

Using these facts, we construct a minimizing sequence as follows:

\begin{enumerate}
\item For each \( k \in \mathbb{N} \), compute a finite \( \delta_k \)-net \( \{x_{k,i}\}_{i=1}^{N_k} \subseteq K \) with mesh \( \delta_k := \delta(2^{-k}) \), using the effective compactness of \( K \).
\item Evaluate \( f(x_{k,i}) \) for each \( i \), and select \( x_k \in \{x_{k,i}\} \) such that \( f(x_k) \) is within \( 2^{-k} \) of the minimal value over the net.
\item Define \( x^* := \lim_{k \to \infty} x_k \). Since the modulus ensures \( f(x_k) \to \inf f(K) \), and \( K \) is compact and closed under such limits, \( x^* \in K \).
\end{enumerate}

The function \( k \mapsto x_k \) can be chosen to produce rational approximants \( g(k) \in \mathbb{Q}_{S_n} \) such that \( |x^* - g(k)| < 2^{-k} \), with \( g \) definable in \( \mathcal{F}_n \).

\end{proof}

\section{Axioms of Fractal Arithmetic and Calculus}
\label{sec:fractal-axioms}

\begin{axiom}[Closure under Arithmetic Operations]
\label{ax:closure-arithmetic}
For each level \( n \in \mathbb{N} \), the set \( \mathbb{R}_{S_n} \) is a subfield of \( \mathbb{R} \), satisfying:
\[
0, 1 \in \mathbb{R}_{S_n}, \quad \forall x, y \in \mathbb{R}_{S_n}\ (x \pm y,\ x \cdot y,\ -x \in \mathbb{R}_{S_n}), \quad y \ne 0 \Rightarrow y^{-1} \in \mathbb{R}_{S_n}.
\]
Moreover, if \( x = \lim f(k) \) and \( y = \lim g(k) \), where \( f, g \colon \mathbb{N} \to \mathbb{Q}_{S_n} \) are \( \mathcal{F}_n \)-definable, then \( x \pm y \), \( x \cdot y \), and \( 1/y \) (if \( y \neq 0 \)) admit \( \mathcal{F}_n \)-definable approximations constructed pointwise from \( f \) and \( g \).
\end{axiom}

\begin{axiom}[Definable Integer Arithmetic]
\label{ax:integer-arithmetic}
The set of $S_n$-definable integers is:
\[
\mathbb{Z}_{S_n} := \{ z \in \mathbb{Z} \mid z \text{ is definable in } \mathcal{F}_n \},
\]
and the operations \( + \), \( - \), and \( \times \) on \( \mathbb{Z}_{S_n} \) are computable in \( \mathcal{F}_n \).
\end{axiom}

\begin{axiom}[Effective Approximation]
\label{ax:approximation}
Every real number \( x \in \mathbb{R}_{S_n} \) admits a total sequence \( f \colon \mathbb{N} \to \mathbb{Q}_{S_n} \), computable in \( \mathcal{F}_n \), such that:
\[
\forall k \in \mathbb{N},\quad |x - f(k)| < 2^{-k}.
\]
\end{axiom}

\begin{axiom}[Algebraic Real Inclusion]
\label{ax:algebraic-closure}
Let \( p(x) = a_0 + a_1 x + \dots + a_d x^d \in (\mathbb{Q}_{S_n})[x] \) be a polynomial with coefficients computable in \( \mathcal{F}_n \). If \( x \in \mathbb{R} \) is a computable root of \( p(x) \) in \( \mathcal{F}_n \), then \( x \in \mathbb{R}_{S_n} \).
\end{axiom}

\begin{axiom}[Hierarchy Strictness]
\label{ax:hierarchy-strictness}
For each \( n \in \mathbb{N} \), there exists a real number \( x \in \mathbb{R}_{S_{n+1}} \setminus \mathbb{R}_{S_n} \), i.e., the hierarchy is strictly increasing:
\[
\mathbb{R}_{S_0} \subsetneq \mathbb{R}_{S_1} \subsetneq \dots \subsetneq \mathbb{R}_{S_n} \subsetneq \mathbb{R}_{S_{n+1}} \subsetneq \dots
\]
\end{axiom}

\begin{axiom}[Fractal Differentiability]
\label{ax:diff}
Let \( f \colon \mathbb{R}_{S_n} \to \mathbb{R}_{S_n} \) be a function definable in \( \mathcal{F}_n \). Then \( f \) is differentiable at \( x \in \mathbb{R}_{S_n} \) if there exists \( f'(x) \in \mathbb{R}_{S_{n+1}} \) such that:
\[
\lim_{\substack{h \to 0 \\ h \in \mathbb{R}_{S_n}}} \frac{f(x+h) - f(x)}{h} = f'(x),
\]
where the limit is taken over \( h \in \mathbb{Q}_{S_n} \setminus \{0\} \), with a computable modulus of convergence \( \delta(\varepsilon) \) in \( \mathcal{F}_{n+1} \).
\end{axiom}

\begin{definition}[Fractally Smooth Function]
\label{def:fractally-smooth}
A function \( f \colon \mathbb{R}_{S_n} \to \mathbb{R}_{S_n} \) is said to be \( C^k \)-smooth at level \( n \) if all derivatives up to order \( k \), defined recursively via Axiom~\ref{ax:diff}, exist and are \( \mathcal{F}_{n+k} \)-definable.
\end{definition}

\begin{axiom}[Fractal Integration]
\label{ax:integral}
Let \( f \colon \mathbb{R}_{S_n} \to \mathbb{R}_{S_n} \) be \( \mathcal{F}_n \)-definable and let \( [a, b] \subset \mathbb{R}_{S_n} \). Then the definite integral
\[
\int_a^b f(x)\, dx := \lim_{k \to \infty} \sum_{i=0}^{k-1} f(x_i)\Delta x_i
\]
exists in \( \mathbb{R}_{S_{n+1}} \), where \( x_i = a + i(b-a)/k \in \mathbb{R}_{S_n} \), \( \Delta x_i = (b-a)/k \), and the limit is computable in \( \mathcal{F}_{n+1} \).
\end{axiom}

\begin{axiom}[Calculus Closure]
\label{ax:calculus-closure}
If a function \( f \in \mathcal{F}_n \) is differentiable (resp. integrable) over \( \mathbb{R}_{S_n} \), then its derivative \( f' \) (resp. integral \( \int f \)) is definable in \( \mathcal{F}_{n+1} \).
\end{axiom}

\begin{theorem}[Fundamental Theorem of Fractal Calculus (FTC\(_n\))]
\label{thm:ftc}
Let \( f \colon \mathbb{R}_{S_n} \to \mathbb{R}_{S_n} \) be \( \mathcal{F}_n \)-definable and fractally continuous on \( [a, b] \subset \mathbb{R}_{S_n} \). Define
\[
F(x) := \int_a^x f(t)\, dt
\]
for \( x \in [a, b] \). Then \( F \in \mathcal{F}_{n+1} \), and \( F \) is fractally differentiable with \( F'(x) = f(x) \in \mathbb{R}_{S_{n+1}} \),
in line with constructive formulations of the fundamental theorem of calculus \cite{Bishop1967,Bridges1986,Weihrauch2000}.

Conversely, if \( F \colon \mathbb{R}_{S_n} \to \mathbb{R}_{S_{n+1}} \) is \( \mathcal{F}_{n+1} \)-definable and fractally differentiable with derivative \( F'(x) = f(x) \in \mathbb{R}_{S_n} \), then:
\[
\int_a^b f(x)\,dx = F(b) - F(a).
\]
\end{theorem}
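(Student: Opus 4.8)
The plan is to treat the two directions separately: the forward direction shows that antidifferentiating a fractally continuous $f$ recovers $f$, while the converse recovers the integral from any antiderivative. Both rest on additivity of the fractal integral over adjacent intervals and on a \emph{uniform} effective modulus of continuity, which is available because $[a,b]$ is effectively compact and $f$ is fractally continuous in the sense of \Cref{def:effective-continuity-compact}. Throughout, the definability bookkeeping is controlled by \Cref{ax:calculus-closure}, which places the relevant moduli and limits one level up, in $\mathcal{F}_{n+1}$.

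For the forward direction I would first invoke \Cref{ax:integral} to obtain, for each $x \in [a,b]$, the value $F(x) = \int_a^x f(t)\,dt \in \mathbb{R}_{S_{n+1}}$, with the defining Riemann-sum limit computable in $\mathcal{F}_{n+1}$. Since the nodes $x_i = a + i(x-a)/k$ depend on the upper endpoint $x$ in an $\mathcal{F}_{n+1}$-definable way, the map $x \mapsto F(x)$ is itself $\mathcal{F}_{n+1}$-definable, giving $F \in \mathcal{F}_{n+1}$. To compute $F'(x)$, additivity of the integral yields
\[
\frac{F(x+h) - F(x)}{h} - f(x) = \frac{1}{h}\int_x^{x+h} \big(f(t) - f(x)\big)\,dt,
\]
and the right-hand side is bounded in absolute value by $\varepsilon$ whenever $|h| < \delta(\varepsilon)$, where $\delta$ is the effective modulus from \Cref{def:effective-continuity-compact}, since then $|f(t)-f(x)| < \varepsilon$ throughout the interval of integration. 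This shows $F'(x) = f(x)$ with a modulus $\delta$ transported to $\mathcal{F}_{n+1}$, as \Cref{ax:diff} demands; the containment $F'(x) = f(x) \in \mathbb{R}_{S_{n+1}}$ is automatic from $f(x) \in \mathbb{R}_{S_n} \subseteq \mathbb{R}_{S_{n+1}}$.

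For the converse, rather than invoking a (non-constructive) mean value theorem, I would reduce to the forward direction. Set $G(x) := \int_a^x f(t)\,dt$; by the first part $G$ is $\mathcal{F}_{n+1}$-definable with $G' = f$ and $G(a) = 0$. The difference $H := F - G$ is then $\mathcal{F}_{n+1}$-definable with $H' = F' - G' = 0$ on $[a,b]$. The crux is the constructive lemma that a function with uniformly zero derivative is constant: partitioning by nodes $t_i$ of mesh below the modulus of differentiability of $H$ and telescoping,
\[
|H(b) - H(a)| \le \sum_i |H(t_{i+1}) - H(t_i)| \le \sum_i \varepsilon\,(t_{i+1} - t_i) = \varepsilon\,(b-a).
\]
Since $\varepsilon \in \mathbb{Q}_{S_n}^+$ is arbitrary and a fractal real bounded in absolute value by every positive $S_n$-rational is $0$, we obtain $H(b) = H(a)$; the same argument at each $x$ gives $H \equiv F(a)$, hence $G(x) = F(x) - F(a)$. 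Evaluating at $x = b$ yields $\int_a^b f(x)\,dx = F(b) - F(a)$.

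The main obstacle is the uniformity of the modulus in the telescoping step. Differentiability as stated in \Cref{ax:diff} is pointwise, furnishing a modulus $\delta_x$ that may vary with $x$, whereas the telescoping estimate needs a single modulus valid across $[a,b]$. I would resolve this by extracting a uniform modulus on the effectively compact interval $[a,b]$---the same mechanism that upgrades pointwise continuity to the uniform effective continuity of \Cref{def:effective-continuity-compact}---and by verifying that this extraction, together with the Riemann-sum and telescoping bookkeeping, remains definable one level up in $\mathcal{F}_{n+1}$. Once this uniformity is secured, the remaining estimates (additivity of the integral, the averaging bound, and the collapse as $\varepsilon \to 0$) are routine.
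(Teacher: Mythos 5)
The paper states FTC\(_n\) without any proof: it appears at the end of Section~\ref{sec:fractal-axioms} as an asserted companion to Axioms~\ref{ax:diff}--\ref{ax:calculus-closure}, so there is no proof of record to compare yours against. On its own terms, your architecture is the standard constructive one — additivity of the integral plus a uniform modulus for the forward direction, and reduction of the converse to the ``zero derivative implies constant'' lemma via telescoping — and the definability bookkeeping (everything landing in \( \mathcal{F}_{n+1} \)) is consistent with Axioms~\ref{ax:integral} and~\ref{ax:calculus-closure}.

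Two genuine gaps remain. First, the theorem's hypothesis is that \( f \) is \emph{fractally continuous}, which by Axiom~\ref{ax:fractal-continuity} is the purely topological condition that preimages of \( \mathcal{T}_n \)-open sets are open; you silently substitute the much stronger \emph{effective} continuity of Definition~\ref{def:effective-continuity-compact}, which comes equipped with an \( \mathcal{F}_n \)-computable uniform modulus \( \delta \). Your key estimate \( |f(t)-f(x)|<\varepsilon \) for \( |t-x|<\delta(\varepsilon) \) lives entirely on the stronger side, and nothing in the paper bridges the two notions: constructively, topological or pointwise continuity does not yield a uniform computable modulus without an effective compactness or fan-type principle, and the paper never establishes that \( [a,b] \) is effectively compact in the sense of Definition~\ref{def:effectively-compact}. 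You would need either to strengthen the hypothesis explicitly or to prove the upgrade. Second, in the converse you correctly flag that Axiom~\ref{ax:diff} furnishes only a pointwise modulus of differentiability for \( H=F-G \), but your proposed resolution (``the same mechanism that upgrades pointwise continuity to uniform effective continuity'') does not exist in the paper: Definition~\ref{def:effective-continuity-compact} \emph{defines} uniform effective continuity rather than deriving it from a pointwise notion, so there is no mechanism to borrow. The telescoping bound \( |H(t_{i+1})-H(t_i)|\le \varepsilon\,(t_{i+1}-t_i) \) genuinely requires a single mesh below \( \delta_x(\varepsilon) \) for all \( x \) simultaneously; obtaining that uniformity is the real content of the converse direction, and as written it is identified but not supplied.
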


\section{Fractal Number Theory}
\label{sec:number-theory}

\begin{axiom}[Stratified Number Classes]
\label{ax:numbers}
For each definability level \( S_n \) (with \( n \in \mathbb{N} \)), we define:

\begin{itemize}
    \item \textbf{Rational numbers:}
    \[
    \mathbb{Q}_{S_n} = \left\{ \frac{p}{q} \,\middle|\, p, q \in \mathbb{Z}_{S_n},\ q \ne 0 \right\}
    \]
    
    \item \textbf{Algebraic numbers:}
    \[
    \mathbb{A}_{S_n} = \left\{ x \in \mathbb{R}_{S_n} \,\middle|\, \exists p \in (\mathbb{Q}_{S_n})[x] \setminus \{0\} \text{ such that } p(x) = 0 \right\}
    \]
    
    \item \textbf{Transcendental numbers:}
    \[
    \mathbb{T}_{S_n} = \mathbb{R}_{S_{n+1}} \setminus \left( \mathbb{A}_{S_n} \cup \mathbb{Q}_{S_{n+1}} \right)
    \]
    
    \item \textbf{Irrational numbers:}
    \[
    \mathbb{I}_{S_n} = \mathbb{R}_{S_n} \setminus \mathbb{Q}_{S_n}
    \]
\end{itemize}

where \( \mathbb{Z}_{S_n} = \left\{ z \in \mathbb{Z} \,\middle|\, z \text{ is definable in } \mathcal{F}_n \right\} \).
\end{axiom}

\begin{axiom}[Minimal Definability Level]
\label{ax:minimal-level}
Every real number $x \in \mathbb{R}$ has a minimal definability level:
\begin{itemize}
    \item If $x$ is definable, $\exists n_0 \in \mathbb{N}\ \forall n < n_0\ (x \notin S_n) \land x \in S_{n_0}$
    \item If $x$ is not definable in any $S_n$, then $x \notin \mathbb{R}_{S_\omega}$
\end{itemize}
\end{axiom}

\begin{axiom}[Arithmetic Closure of Number Classes]
\label{ax:number-closure}
The classes $\mathbb{Q}_{S_n}, \mathbb{A}_{S_n} \subseteq \mathbb{R}_{S_n}$ are closed under addition and multiplication. Moreover, if $x, y \in \mathbb{A}_{S_n}$, then $x \pm y, xy \in \mathbb{A}_{S_{n+1}}$.
\end{axiom}

\begin{principle}[Computational Hierarchy]
The definability classes satisfy:
\begin{enumerate}
    \item $\mathbb{Q}_{S_n} \subsetneq \mathbb{Q}_{S_{n+1}}$
    \item $\mathbb{A}_{S_n} \subsetneq \mathbb{A}_{S_{n+1}}$
    \item $\mathbb{T}_{S_n} \neq \varnothing$ for all $n \geq 1$
\end{enumerate}
\end{principle}

\begin{principle}[Density]
For every $n \in \mathbb{N}$:
\begin{itemize}
    \item $\mathbb{Q}_{S_n}$ is dense in $\mathbb{R}_{S_n}$ under $\mathcal{T}_n$-topology
    \item $\mathbb{A}_{S_n}$ is dense in $\mathbb{R}_{S_{n+1}}$ under $\mathcal{T}_{n+1}$
\end{itemize}
\end{principle}

\begin{definition}[Fractal Continuum]
The union of all definability levels forms a constructive continuum:
\[
\mathbb{R}_{S_\omega} = \bigcup_{n \in \mathbb{N}} \mathbb{R}_{S_n}
\]
\end{definition}

\begin{conjecture}[Liouville Number Stratification]
\label{conj:liouville}
For Liouville numbers $L$ (as introduced in \cite{Liouville1844} and discussed in stratified form in \cite{Semenov2025FractalCountability}):
\begin{itemize}
    \item All Liouville numbers belong to $\mathbb{R}_{S_\omega}$
    \item For any $n$, there exists $L \in \mathbb{T}_{S_{n+1}} \setminus \mathbb{T}_{S_n}$
    \item No Liouville number belongs to any $\mathbb{A}_{S_n}$
\end{itemize}
\end{conjecture}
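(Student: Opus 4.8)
The plan is to treat the three assertions separately, since they are of quite different character: the third is essentially classical, the first hinges on a cardinality caveat, and the second is the genuine construction.

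For the third bullet I would invoke Liouville's theorem on Diophantine approximation. Recall from Axiom~\ref{ax:numbers} that $\mathbb{A}_{S_n} \subseteq \mathbb{A}$, the set of all real algebraic numbers. By Liouville's classical estimate, any algebraic irrational $\alpha$ of degree $d \ge 2$ satisfies $|\alpha - p/q| > c_\alpha q^{-d}$ for some $c_\alpha > 0$ and all rationals $p/q$, while a rational $\alpha = a/b \ne p/q$ satisfies $|\alpha - p/q| \ge (bq)^{-1}$. A Liouville number $L$ violates both bounds, since it admits approximations closer than $q^{-n}$ for every $n$; hence $L$ is neither rational nor an algebraic irrational, i.e. $L$ is transcendental, and therefore $L \notin \mathbb{A} \supseteq \mathbb{A}_{S_n}$ for every $n$. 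I expect this step to be routine.

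For the first bullet I must first confront a cardinality obstruction: the set of Liouville numbers is a dense $G_\delta$ of cardinality continuum, whereas each $\mathbb{R}_{S_n}$, and hence $\mathbb{R}_{S_\omega} = \bigcup_{n} \mathbb{R}_{S_n}$, is countable. So the unrestricted assertion is false, and the defensible statement is that every \emph{definable} Liouville number lies in $\mathbb{R}_{S_\omega}$. To prove this refined claim I would, given a Liouville number $L$ whose approximating data is $\mathcal{F}_n$-definable, exhibit the rapidly converging sequence demanded by Definition~\ref{def:fractal-reals}. For the canonical constant $L_0 = \sum_{k \ge 1} 10^{-k!}$ the partial sums $f(m) = \sum_{k=1}^{m} 10^{-k!} \in \mathbb{Q}_{S_n}$ obey $|L_0 - f(m)| \le 2\cdot 10^{-(m+1)!} < 2^{-m}$, so $f$ witnesses $L_0 \in \mathbb{R}_{S_n}$ at the least $n$ for which $m \mapsto f(m)$ is $\mathcal{F}_n$-definable; the same template applies to any Liouville number with an $\mathcal{F}_n$-definable digit sequence.

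For the second bullet I would build, for each $n$, a Liouville number of minimal definability level exactly $n+2$. Unwinding Axiom~\ref{ax:numbers}, membership $L \in \mathbb{T}_{S_{n+1}} \setminus \mathbb{T}_{S_n}$ reduces, for a transcendental irrational $L$, to the single requirement $L \in \mathbb{R}_{S_{n+2}} \setminus \mathbb{R}_{S_{n+1}}$, because transcendence already gives $L \notin \mathbb{A}_{S_m}$ and irrationality gives $L \notin \mathbb{Q}_{S_m}$ for all $m$. The idea is to encode complexity into Liouville digits: set $L = \sum_{k \ge 1} b_k 10^{-k!}$ with $b_k \in \{0,1\}$ and infinitely many $b_k = 1$, where $(b_k)$ is the characteristic sequence of a set definable at level $n+2$ but not at level $n+1$ (the separating witness supplied by Axiom~\ref{ax:hierarchy-strictness}). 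A direct estimate shows every such bounded-digit lacunary series is Liouville and irrational, while the definability level of $L$ is meant to be governed by that of $(b_k)$, placing $L$ in $\mathbb{R}_{S_{n+2}} \setminus \mathbb{R}_{S_{n+1}}$. The hard part will be precisely this \emph{exact} level calibration: I must show that encoding a level-$(n+2)$ set into the digits neither inadvertently lowers the level (should the digit pattern admit a simpler description as a real than as a set) nor raises it, and that the decoding of $(b_k)$ from $L$ is itself $\mathcal{F}_{n+2}$-definable so that both reductions are tight. Establishing a faithful two-way translation between the syntactic complexity of $(b_k)$ and membership in the $\mathbb{R}_{S_m}$ filtration is where the argument demands the most care, and amounts to a Liouville-flavored refinement of the separation asserted in Axiom~\ref{ax:hierarchy-strictness}.
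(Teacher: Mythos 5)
First, note that the paper states this result only as Conjecture~\ref{conj:liouville} and supplies no proof at all, so there is no ``paper's own proof'' to measure your attempt against; what follows assesses your proposal on its own terms. Your treatment of the third bullet is sound and essentially forced: by Axiom~\ref{ax:numbers} every element of $\mathbb{A}_{S_n}$ is a root of a nonzero polynomial over $\mathbb{Q}_{S_n}\subseteq\mathbb{Q}$, hence algebraic in the classical sense, and Liouville's approximation theorem excludes Liouville numbers from that class. Your observation on the first bullet is the most valuable part of the proposal: the Liouville numbers form an uncountable (indeed comeager) subset of $\mathbb{R}$, while Theorem~\ref{thm:countability} makes $\mathbb{R}_{S_\omega}$ countable, so the first bullet as literally written is refuted by the paper's own results and can only survive in the weakened form ``every $\mathcal{F}_n$-definable Liouville number lies in $\mathbb{R}_{S_\omega}$,'' which your partial-sum witness then handles via Definition~\ref{def:fractal-reals}. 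This is a genuine correction to the statement, not merely a proof of it.

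The gap is in the second bullet, and you have located it yourself without closing it. Your reduction of $L\in\mathbb{T}_{S_{n+1}}\setminus\mathbb{T}_{S_n}$ to $L\in\mathbb{R}_{S_{n+2}}\setminus\mathbb{R}_{S_{n+1}}$ for a transcendental irrational $L$ is correct under Axiom~\ref{ax:numbers}, and the upper bound $L\in\mathbb{R}_{S_{n+2}}$ follows from the partial sums once $(b_k)$ is $\mathcal{F}_{n+2}$-definable. But the lower bound $L\notin\mathbb{R}_{S_{n+1}}$ requires showing that \emph{no} total $\mathcal{F}_{n+1}$-definable function $g\colon\mathbb{N}\to\mathbb{Q}_{S_{n+1}}$ satisfies $|L-g(k)|<2^{-k}$, not merely that the particular series defining $L$ is not $\mathcal{F}_{n+1}$-accessible. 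Closing this requires two things you do not establish: (i) a separation principle at the level of $0$--$1$ sequences (Axiom~\ref{ax:hierarchy-strictness} only asserts the existence of a separating \emph{real}, and the informal ``strictly greater definitional power'' clauses of the hierarchy axioms do not obviously yield a characteristic function of the required complexity); and (ii) an effective decoding argument showing that from any $2^{-k}$-approximation sequence for $L$ one can recover $(b_k)$ within $\mathcal{F}_{n+1}$, so that $L\in\mathbb{R}_{S_{n+1}}$ would force $(b_k)$ down a level --- here the lacunarity of $\sum b_k 10^{-k!}$ plausibly makes the digits recoverable from sufficiently precise approximants, but this must be carried out inside $\mathcal{F}_{n+1}$, whose closure properties the axioms leave underspecified. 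As it stands the second bullet remains a plausible strategy rather than a proof, which is consistent with the paper's decision to leave the statement as a conjecture.
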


\begin{definition}[Stratified Outer Measure]
\label{def:stratified-outer-measure}
Let \( A \subseteq [a,b] \) and fix a definability level \( n \in \mathbb{N} \). The stratified outer measure \( \mu_n^*(A) \) is defined as:
\[
\mu_n^*(A) := \inf \left\{ \sum_{k=1}^N \ell(I_k) \;\middle|\;
\begin{aligned}
&A \subseteq \bigcup_{k=1}^N I_k,\quad I_k = (a_k, b_k) \subseteq [a,b] \\
&a_k, b_k \in \mathbb{Q}_{S_n},\quad \text{and the cover is } \mathcal{F}_n\text{-definable}
\end{aligned}
\right\},
\]
where \( \ell(I_k) = b_k - a_k \) is the length of interval \( I_k \), and \( N \in \mathbb{N} \cup \{\infty\} \).
\end{definition}

\begin{axiom}[Measure-Number Consistency]
\label{ax:measure-number}
Let \( \mu_n^* \) be the stratified outer measure induced by \( \mathcal{F}_n \)-definable covers. Then for all \( n \in \mathbb{N} \):

\begin{itemize}
    \item \textbf{Point Nullity:} For every point \( x \in \mathbb{R}_{S_n} \), the singleton has zero measure:
    \[
    \mu_n^*(\{x\}) = 0.
    \]

    \item \textbf{Transcendental Neighborhood Structure:} If \( x \in \mathbb{T}_{S_n} \), then for every \( \epsilon \in \mathbb{Q}_{S_n}^+ \),
    \[
    \mu_n^*(\mathbb{Q}_{S_n} \cap B_\epsilon(x)) > 0, \quad 
    \mu_n^*(\mathbb{A}_{S_n} \cap B_\epsilon(x)) = 0.
    \]

    \item \textbf{Algebraic Isolation:} If \( x \in \mathbb{A}_{S_n} \setminus \mathbb{Q}_{S_n} \), then \( \exists\, \epsilon > 0 \) in \( \mathbb{Q}_{S_n} \) such that:
    \[
    \mu_n^*(\mathbb{Q}_{S_n} \cap B_\epsilon(x)) = 0.
    \]
\end{itemize}
\end{axiom}

\begin{remark}
This axiom illustrates the fine-grained stratification of number classes under \( \mathcal{F}_n \)-definable measure. Transcendentals exhibit local measure-theoretic richness, while algebraics may become syntactically isolated from rationals in the same layer—mirroring behavior studied in computable measure theory \cite{Weihrauch2000,PourElRichards1989} and further formalized in the stratified setting of \cite{Semenov2025FractalBoundaries}.
\end{remark}

\begin{example}[Stratified Number Classification]
To illustrate the structure of definability levels, we give canonical examples of real numbers and their classification across layers \( S_n \). These examples assume a baseline formal system \( \mathcal{F}_0 \) capable of representing integer arithmetic, basic recursion, and rational operations (e.g., comparable to \( \mathsf{RCA}_0 \)).

\begin{itemize}
    \item \textbf{Level 0}: The system \( \mathcal{F}_0 \) includes all standard integers and rationals with explicitly defined numerators and denominators.
    \[
    0, 1, -2 \in \mathbb{Q}_{S_0},\quad \tfrac{3}{7},\ -\tfrac{11}{5} \in \mathbb{Q}_{S_0}
    \]
    
    \item \textbf{Level 1}: The first definability extension, \( \mathcal{F}_1 \), permits root extraction and solutions to polynomial equations with \( S_0 \)-coefficients, yielding constructively algebraic numbers:
    \[
    \sqrt{2},\ \sqrt[3]{5},\ \text{the golden ratio } \tfrac{1 + \sqrt{5}}{2} \in \mathbb{A}_{S_1}
    \]
    
    \item \textbf{Level 2}: The next layer admits definitions via convergent power series and standard analytic constructions:
    \[
    e = \sum_{k=0}^\infty \frac{1}{k!},\quad \pi = 4 \sum_{k=0}^\infty \frac{(-1)^k}{2k+1} \in \mathbb{T}_{S_2}
    \]
    as studied in classical analysis \cite{Weierstrass1872,Lindemann1882} and revisited in stratified form in \cite{Semenov2025FractalAnalysis}.
    
    \item \textbf{Level 3}: Transcendental numbers with highly nontrivial approximations, such as Liouville numbers, require definability strength beyond that of analytic closure. A classic example:
    \[
    L = \sum_{k=1}^\infty 10^{-k!} \in \mathbb{T}_{S_3} \setminus \mathbb{A}_{S_2}
    \]
    The irrationality measure of \( L \) exceeds any algebraic threshold, making it a natural inhabitant of \( S_3 \) but not definable from lower layers.
\end{itemize}

These examples illustrate how classical number-theoretic classes are stratified according to syntactic complexity and computational definability. Each inclusion reflects the increase in formal resources available at level \( \mathcal{F}_n \).
\end{example}

\begin{example}[Minimal Primitive System]
Assume \( \mathcal{F}_0 \) corresponds to a primitive recursive theory (e.g., Skolem arithmetic), permitting only operations over \( \mathbb{N} \) with basic total functions and no general induction. Then:

\begin{itemize}
    \item \textbf{Level 0}: Only natural numbers and their primitive encodings are definable:
    \[
    0, 1, 2, \dots \in \mathbb{Z}_{S_0},\quad \text{but } \tfrac{1}{2}, -3 \notin \mathbb{Q}_{S_0}
    \]

    \item \textbf{Level 1}: By extending with explicit rational encoding, we obtain:
    \[
    \tfrac{1}{2}, -\tfrac{3}{5}, \tfrac{355}{113} \in \mathbb{Q}_{S_1}
    \]
    
    \item \textbf{Level 2}: Now algebraic operations (roots, polynomial solutions) become expressible:
    \[
    \sqrt{2},\ \sqrt[3]{5},\ \text{roots of } x^2 - x - 1 \in \mathbb{A}_{S_2}
    \]
    
    \item \textbf{Level 3}: Analytic numbers (e.g., via convergent series with rational coefficients):
    \[
    e,\ \pi \in \mathbb{T}_{S_3}
    \]

    \item \textbf{Level 4}: Explicitly constructed Liouville-type numbers, e.g.
    \[
    L = \sum_{k=1}^\infty 2^{-k!} \in \mathbb{T}_{S_4} \setminus \mathbb{A}_{S_3}
    \]
\end{itemize}
This shows how familiar number classes emerge gradually from minimal formal resources.
\end{example}

\begin{example}[Geometrically Generated Fractal Real]
Consider the following construction:
\begin{itemize}
    \item Start with the unit interval \( [0,1] \).
    \item At each step \( k \), remove the open middle \( \tfrac{1}{3} \)-interval from each remaining segment.
    \item Let \( x = \sum_{k=1}^\infty \tfrac{1}{3^{k!}} \).
\end{itemize}

\textbf{Properties}:
\begin{itemize}
    \item Each digit is defined using a simple geometric rule (inclusion/exclusion based on base-3 expansion).
    \item The resulting real number is not algebraic, since it lies in the classical middle-third Cantor set.
    \item It belongs to \( \mathbb{T}_{S_n} \) for some \( n \geq 3 \), depending on how one encodes the limiting process in \( \mathcal{F}_n \).
    \item The base-3 representation of \( x \) avoids the digit 1 entirely, a property verifiable via a definable automaton in higher \( S_n \).
\end{itemize}

This type of number exemplifies how fractal geometry intersects with definability hierarchies: \( x \) is not only non-algebraic, but its description involves a recursively sparse structure with layered computational depth.
\end{example}

\begin{example}[Computable vs Definable Numbers]
This example illustrates the nuanced relationship between computability and definability in the stratified hierarchy. Some real numbers are computable but require higher definability layers to express; others are definable in strong systems but not computable in the classical sense.

\begin{itemize}
    \item \textbf{Computable but not $S_1$-definable:}
    \[
    x = \sum_{k=0}^\infty \frac{1}{2^{2^k}} \in \mathbb{T}_{S_2} \setminus \mathbb{T}_{S_1}
    \]
    This real number is computable via a simple binary expansion algorithm, but the doubly exponential convergence requires resources exceeding $S_1$.

    \item \textbf{Definable but not computable:}
    Chaitin's constant \( \Omega \), which encodes the halting probability of a universal Turing machine \cite{Chaitin1975}, is definable in higher \( S_n \) via oracle-based constructions, but not computable within any effective system.

    \item \textbf{Natural boundary case:}
    \[
    \zeta(3) = \sum_{k=1}^\infty \frac{1}{k^3} \in \mathbb{T}_{S_3}
    \]
    Known to be irrational by Apéry's theorem, yet its precise definability complexity lies between standard arithmetic and transcendental constructions.
\end{itemize}
\end{example}

\begin{example}[Classical but Non-Fractal Reals]
\label{ex:nonconstructive}
This example demonstrates real numbers that are:
\begin{itemize}
    \item Definable in classical set theory (ZFC)
    \item Provably existent but non-constructive
    \item Outside all definability levels $S_n$ in our hierarchy
\end{itemize}

\noindent\textbf{1. Vitali-Type Non-Measurable Supremum}

Let $\{A_i\}_{i \in I}$ be a Vitali partition of $[0,1]$ (existing by the Axiom of Choice). Fix a subset \( I_0 \subset I \) such that both \( I_0 \) and its complement \( I \setminus I_0 \) are non-measurable. Define the real number:
\[
x_V := \sup\left\{r \in \mathbb{Q} \cap [0,1] \mid \exists i \in I_0,\ r \in A_i \right\}.
\]

Defined using a Vitali partition \cite{Vitali1905} of $[0,1]$ under the Axiom of Choice, this supremum exists classically by completeness of \( \mathbb{R} \), but has no computable or constructive description.

\vspace{0.5em}
\noindent\textbf{Definability Analysis:}
\begin{itemize}
    \item \( x_V \notin \mathbb{R}_{S_n} \) for any \( n \), because:
    \begin{enumerate}
        \item The choice of \( I_0 \) requires uncountable and non-effective selection;
        \item No formal system \( \mathcal{F}_n \) can decide whether \( r \in \bigcup_{i \in I_0} A_i \) for a given rational \( r \).
    \end{enumerate}
\end{itemize}

\noindent\textbf{2. Fixed-Point Transcendental Number}
\begin{itemize}
    \item Define the Picard operator $T \in \mathcal{F}_{n+2}$:
    \[
    T(f)(x) = \int_0^x f(t)dt + \frac{x^2}{2}
    \]
    \item The unique fixed point $f_\infty$ satisfies:
    \[
    f_\infty(x) = \sum_{k=0}^\infty \frac{x^{2k+2}}{2^{k+1}(2k+2)!}
    \]
    This example involves a Picard operator used in classical analysis and computability theory \cite{PourElRichards1989}. Evaluation at $x=1$ gives:
    \[
    f_\infty(1) = \sqrt{e} \sinh(1/\sqrt{2}) \approx 0.798
    \]
    
    \textbf{Definability}:
    \begin{itemize}
        \item If $T \in \mathcal{F}_n$, then $f_\infty(1) \in \mathbb{T}_{S_{n+3}}$
        \item Without uniform convergence bounds, $f_\infty(1) \notin \mathbb{R}_{S_\omega}$
        \item The series coefficients are $S_{n+1}$-definable
    \end{itemize}
\end{itemize}

\noindent\textbf{Key Observations}:
\begin{itemize}
    \item \textbf{Measure-theoretic}: $x_V$ lacks Borel/analytic representation
    \item \textbf{Computational}: $f_\infty(1)$ requires arbitrarily high definability
    \item \textbf{Structural}: Both examples satisfy $\mathbb{R}_{S_\omega} \subsetneq \mathbb{R}$
    \item \textbf{Definability-theoretic}: These reals illustrate strict boundaries between classical existence and syntactic realizability
\end{itemize}
\end{example}

\section{Computational Complexity Control}
\label{sec:complexity}

This section formalizes the notion of bounded definability through a hierarchy of formal systems \( \mathcal{F}_n \), each corresponding to a definability layer \( S_n \). These systems govern which functions, sequences, and real numbers are constructively admissible at each level, based on resource-constrained computation.

\begin{axiom}[Bounded Resource Hierarchies]
\label{ax:complexity}
Each definability level \( S_n \) corresponds to a formal system \( \mathcal{F}_n \), defined inductively as follows:

\begin{itemize}
    \item \textbf{Base Level \( \mathcal{F}_0 \)}:
    \begin{itemize}
        \item Time complexity: \( \mathcal{O}(k^c) \), for some fixed \( c \leq 2 \)
        \item Space complexity: \( \mathcal{O}(k) \)
        \item Allowed constructions: elementary arithmetic, bounded loops, finite tables, quantifier-free \( \Delta^0_0 \) schemes
        \item Examples: constant sequences, linear functions, rational evaluation
    \end{itemize}
    
    \item \textbf{Inductive Step \( \mathcal{F}_{n+1} \)}:
    \begin{itemize}
        \item Definitions are permitted if their time complexity satisfies:
        \[
        \operatorname{Time}_{\mathcal{F}_{n+1}}(k) \leq P_n\big( \operatorname{Time}_{\mathcal{F}_n}(k) \big),
        \quad P_n \in \mathrm{Poly}_{\leq 3}
        \]
        \item Here \( P_n(x) = a_0 + a_1 x + a_2 x^2 + a_3 x^3 \), with coefficients bounded by \( a_i \leq 2^{n+2} \)
        \item The language of \( \mathcal{F}_{n+1} \) includes composition of \( \mathcal{F}_n \)-definable functions and \\bounded recursion over \( S_n \)
    \end{itemize}
\end{itemize}
\end{axiom}

\begin{example}[Representative Polynomial Bounds]
Each level \( \mathcal{F}_n \) defines a class of real functions and objects whose approximation or construction is computable within polynomial time bounds, relative to input precision \( k \). Table~\ref{tab:poly-bounds} summarizes representative operations.
\end{example}

\begin{table}[ht]
\centering
\renewcommand{\arraystretch}{1.2}
\begin{tabular}{@{} l l l @{}}
\toprule
\textbf{Level} & \textbf{Time Bound (Worst)} & \textbf{Representative Operations} \\
\midrule
\( \mathcal{F}_0 \) & \( \mathcal{O}(k^2) \) & Integer arithmetic, comparisons \\
\( \mathcal{F}_1 \) & \( \mathcal{O}(k^5) \) & Rational roots, bounded search \\
\( \mathcal{F}_2 \) & \( \mathcal{O}(k^8) \) & $e^x$, $\log x$, rational approximations of $\pi$ \\
\( \mathcal{F}_3 \) & \( \mathcal{O}(k^{12}) \) & Definite integrals, special functions \\
\bottomrule
\end{tabular}
\caption{Sample polynomial complexity bounds by definability level}
\label{tab:poly-bounds}
\end{table}

\begin{theorem}[Controlled Stratification of Complexity]
\label{thm:controlled-growth}
The hierarchy \( \{ \mathcal{F}_n \}_{n \in \mathbb{N}} \) satisfies:
\begin{enumerate}
    \item \textbf{Subexponential growth:} All functions definable in \( \mathcal{F}_n \) are computable in time \( \mathcal{O}(k^{c_n}) \) for some finite constant \( c_n \)
    \item \textbf{Strict inclusion:} \( \mathcal{F}_n \subsetneq \mathcal{F}_{n+1} \) and thus \( S_n \subsetneq S_{n+1} \)
    \item \textbf{Polynomial completeness:} All polynomial-time computable real numbers belong to \( \bigcup_{n} \mathbb{R}_{S_n} \)
\end{enumerate}
\end{theorem}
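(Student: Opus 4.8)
The plan is to prove each of the three claims in Theorem~\ref{thm:controlled-growth} separately, drawing on the inductive resource bounds of Axiom~\ref{ax:complexity} for the first two and on the definition of \( \mathbb{R}_{S_n} \) (Definition~\ref{def:fractal-reals}) together with the polynomial time budget for the third. For the \emph{subexponential growth} claim, I would argue by induction on \( n \). The base case is immediate from Axiom~\ref{ax:complexity}, which fixes \( \operatorname{Time}_{\mathcal{F}_0}(k) = \mathcal{O}(k^c) \) with \( c \le 2 \). For the inductive step, suppose every \( \mathcal{F}_n \)-definable function runs in time \( \mathcal{O}(k^{c_n}) \). The recursion bound gives \( \operatorname{Time}_{\mathcal{F}_{n+1}}(k) \le P_n(\operatorname{Time}_{\mathcal{F}_n}(k)) \) with \( P_n \) of degree at most \( 3 \); substituting \( \mathcal{O}(k^{c_n}) \) into a degree-\( 3 \) polynomial yields \( \mathcal{O}(k^{3 c_n}) \), so one may take \( c_{n+1} := 3 c_n \) (absorbing the bounded coefficients \( a_i \le 2^{n+2} \) into the implicit constant for fixed \( n \)). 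This furnishes a finite exponent \( c_n \le 3^n \cdot c \) at every level, establishing the claim.

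For \emph{strict inclusion}, I would combine two observations. The inclusion \( \mathcal{F}_n \subseteq \mathcal{F}_{n+1} \) is structural: by Axiom~\ref{ax:complexity} the language of \( \mathcal{F}_{n+1} \) contains the composition closure and bounded recursion over \( S_n \)-objects, so every \( \mathcal{F}_n \)-definition is verbatim an \( \mathcal{F}_{n+1} \)-definition. Strictness then follows from Axiom~\ref{ax:hierarchy-strictness}, which already asserts the existence of a witness \( x \in \mathbb{R}_{S_{n+1}} \setminus \mathbb{R}_{S_n} \); since such a real must arise from an \( \mathcal{F}_{n+1} \)-definable approximating sequence that is not \( \mathcal{F}_n \)-definable, the containment \( S_n \subsetneq S_{n+1} \) is strict, and correspondingly \( \mathcal{F}_n \subsetneq \mathcal{F}_{n+1} \). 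I would present the witness at the level of approximating sequences whose required precision growth outpaces the \( \mathcal{O}(k^{c_n}) \) budget of level \( n \) but stays within level \( n+1 \), making the separation explicit rather than merely invoking the axiom.

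For \emph{polynomial completeness}, let \( x \in \mathbb{R} \) be polynomial-time computable, so there is a polynomial-time algorithm producing, on input \( k \), a rational \( f(k) \) with \( |x - f(k)| < 2^{-k} \); say its running time is \( \mathcal{O}(k^d) \) for some fixed \( d \). Since \( c_n \ge 3^{n-1} c \) grows without bound, there exists \( n \) with \( c_n \ge d \), and the budget of \( \mathcal{F}_n \) then accommodates the algorithm; the produced rationals lie in \( \mathbb{Q}_{S_n} \) because their numerators and denominators are computed within the same bound, so \( f \colon \mathbb{N} \to \mathbb{Q}_{S_n} \) is \( \mathcal{F}_n \)-definable and witnesses \( x \in \mathbb{R}_{S_n} \subseteq \bigcup_n \mathbb{R}_{S_n} \).

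The main obstacle I anticipate is the \emph{strictness} half of claim~(2): the excerpt's axioms supply the growth bounds and the inclusion, but pinning down an explicit real (or an explicit function) that is \( \mathcal{F}_{n+1} \)-definable yet provably \emph{not} \( \mathcal{F}_n \)-definable requires a genuine separation, typically a diagonalization against the \( \mathcal{O}(k^{c_n}) \) time-bounded definitions of level \( n \). The subtlety is that Axiom~\ref{ax:hierarchy-strictness} asserts this strictness but a self-contained proof should exhibit the witness via a time-hierarchy-style diagonal argument, enumerating the (countably many) \( \mathcal{F}_n \)-definable approximation schemes and constructing a real whose \( k \)-th approximation differs from the \( k \)-th scheme's output while remaining computable within the strictly larger \( \mathcal{F}_{n+1} \) budget. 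I would lean on the classical time hierarchy theorem to guarantee this separation is non-vacuous, and flag that the argument relies on the \( \mathcal{F}_n \)-definable schemes being effectively enumerable within \( \mathcal{F}_{n+1} \), which the conservative-extension clauses of Axioms~\ref{ax:hierarchy-formal-systems} and~\ref{ax:complexity} are intended to guarantee.
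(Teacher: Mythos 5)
The paper states Theorem~\ref{thm:controlled-growth} without any proof, so there is nothing of the author's to compare against; your proposal supplies the derivation the paper evidently intends, namely a direct unwinding of Axiom~\ref{ax:complexity} for parts (1) and (2) and of Definition~\ref{def:fractal-reals} for part (3). Your induction giving \( c_{n+1} = 3c_n \) and hence \( c_n \le 3^n c \) is exactly right, and your treatment of strictness --- the inclusion is structural, the strictness is imported from Axiom~\ref{ax:hierarchy-strictness} unless one insists on a self-contained separation --- is the correct reading of how the axiom system is organized.

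Two caveats are worth recording, both of which you partially anticipate. First, in part (2) your sketched diagonalization must separate \emph{reals}, which is strictly harder than separating time-bounded functions: a real \( x \) lies in \( \mathbb{R}_{S_n} \) if \emph{some} \( \mathcal{F}_n \)-definable sequence approximates it within \( 2^{-k} \), so the diagonal construction has to defeat every level-\( n \) scheme, not merely arrange that the \( k \)-th approximation of the \( k \)-th scheme is wrong at one input. The standard repair is to fix the digits of \( x \) in stages, committing at stage \( i \) to enough digits to force \( |x - q_i(k_i)| \ge 2^{-k_i} \) for some \( k_i \), where \( q_i \) is the \( i \)-th level-\( n \) scheme; you should state this explicitly rather than lean on the time hierarchy theorem alone, which separates function classes, not sets of limits. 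Second, in part (3) you pass from ``the approximation algorithm runs within the time budget of \( \mathcal{F}_n \)'' to ``\( f \) is \( \mathcal{F}_n \)-definable.'' Axiom~\ref{ax:complexity} only guarantees that the language of \( \mathcal{F}_{n+1} \) contains composition of \( \mathcal{F}_n \)-definable functions and bounded recursion over \( S_n \); it does not assert that every algorithm meeting the time bound is expressible in that language, so this step needs either an explicit expressive-completeness assumption for the \( \mathcal{F}_n \)-languages relative to their time classes or a simulation argument showing bounded recursion suffices. Neither caveat is fatal at the level of rigor the paper itself operates at, but both are genuine gaps a careful referee would ask you to close.
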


\begin{definition}[Tamed Transcendentals]
\label{def:tamed-transc}
A real number \( x \in \mathbb{T}_{S_n} \) is called \emph{tamed} if there exists a sequence \( \{q_k\} \subset \mathbb{Q}_{S_n} \) such that:
\[
\forall k \in \mathbb{N},\quad |x - q_k| < 2^{-k}, \quad \text{and } q_k \text{ is computable in time } \mathcal{O}(k^d).
\]
\end{definition}

\paragraph{Interpretation.}
Each level \( \mathcal{F}_n \) encodes not only logical definability, but also bounded constructive access to objects. The stratification reflects a layered semantics of \emph{approximation complexity}, where reals and functions are organized not by ontological status, but by syntactic and resource-based accessibility.

\paragraph{Remark.}
The complexity function \( \rho_n(k) \) may be customized: factorial, exponential, or sublinear growth can be adopted for specific domains (e.g., subrecursive hierarchies, bounded arithmetic). The axioms remain valid under such parameterizations, making the framework adaptable to diverse constructive paradigms.

\section{Principle of Fractal Countability}
\label{sec:fractal-countability}

Despite their layered construction and rich internal structure, the sets of fractal real numbers introduced in this framework remain fundamentally countable. This reflects the syntactic and computational nature of the definability hierarchy.

\begin{theorem}[Fractal Countability]
\label{thm:countability}
The union of all definability levels of fractal real numbers is countable:
\[
\mathbb{R}_{S_\omega} := \bigcup_{n \in \mathbb{N}} \mathbb{R}_{S_n} \quad \text{satisfies} \quad |\mathbb{R}_{S_\omega}| = \aleph_0.
\]
Moreover, \( \mathbb{R}_{S_\omega} \) is equipotent with any other countable set:
\[
\mathbb{R}_{S_\omega} \cong \mathbb{N} \cong \mathbb{Q}.
\]
\end{theorem}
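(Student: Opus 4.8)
The plan is to bound \( \mathbb{R}_{S_\omega} \) from above by \( \aleph_0 \) via a syntactic enumeration argument, bound it from below by exhibiting an infinite definable subset, and then invoke the standard fact that any countably infinite set is equipotent with \( \mathbb{N} \) and \( \mathbb{Q} \).

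First I would show each layer \( \mathbb{R}_{S_n} \) is at most countable. By Definition~\ref{def:fractal-reals}, every \( x \in \mathbb{R}_{S_n} \) is witnessed by a total function \( f \colon \mathbb{N} \to \mathbb{Q}_{S_n} \) that is definable in \( \mathcal{F}_n \) and satisfies \( |x - f(k)| < 2^{-k} \). Each such witness is specified by a finite syntactic expression in the language of \( \mathcal{F}_n \). Since the language of \( \mathcal{F}_n \) is built over a finite or countable alphabet (Axiom~\ref{ax:hierarchy-formal-systems}, Axiom~\ref{ax:complexity}) and every definition is a finite string, the set \( \mathrm{Def}_n \) of admissible convergence witnesses injects into the set of finite strings over a countable alphabet, which admits a G\"odel numbering into \( \mathbb{N} \) and is therefore countable. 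The assignment sending a witness to the unique real it approximates is a well-defined surjection \( \mathrm{Def}_n \twoheadrightarrow \mathbb{R}_{S_n} \) — the limit is unique because the modulus forces \( 2^{-k} \)-convergence — so \( |\mathbb{R}_{S_n}| \le |\mathrm{Def}_n| = \aleph_0 \).

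Next I would assemble the union. Because the levels are indexed by \( n \in \mathbb{N} \), the set \( \mathbb{R}_{S_\omega} = \bigcup_{n} \mathbb{R}_{S_n} \) is a countable union of countable sets, hence countable, giving \( |\mathbb{R}_{S_\omega}| \le \aleph_0 \). For the reverse inequality I would note that \( \mathbb{R}_{S_0} \supseteq \mathbb{Q}_{S_0} \), which by the Initial Basis (Axiom~\ref{ax:initial-basis}) together with closure under arithmetic (Axiom~\ref{ax:closure-arithmetic}) contains the infinitely many definable integers \( 0, 1, 2, \dots \); hence \( \mathbb{R}_{S_\omega} \) is infinite and \( |\mathbb{R}_{S_\omega}| \ge \aleph_0 \). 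Combining the two bounds via the Cantor--Schr\"oder--Bernstein theorem yields \( |\mathbb{R}_{S_\omega}| = \aleph_0 \), and since every countably infinite set is in bijection with \( \mathbb{N} \) and with \( \mathbb{Q} \), the equipotences \( \mathbb{R}_{S_\omega} \cong \mathbb{N} \cong \mathbb{Q} \) follow at once.

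The main obstacle will be the upper bound, specifically making rigorous the claim that only countably many reals are definable at each level. This hinges on two points that must be stated carefully: first, that definability in \( \mathcal{F}_n \) genuinely reduces to the existence of a finite syntactic witness, so that \( \mathrm{Def}_n \) embeds into finite strings and inherits countability; and second, that the witness-to-real map is well-defined and surjective, which relies on the uniqueness of limits guaranteed by the \( 2^{-k} \) modulus in Definition~\ref{def:fractal-reals}. A secondary subtlety is legitimizing the countable union without invoking strong choice: here the canonical G\"odel numbering of each \( \mathcal{F}_n \) supplies explicit enumerations uniformly in \( n \), so \( \mathbb{R}_{S_\omega} \) can be enumerated through a single pairing function \( \mathbb{N} \times \mathbb{N} \to \mathbb{N} \), and no appeal to the axiom of choice is required.
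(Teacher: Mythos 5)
Your proposal is correct and follows essentially the same route as the paper's own sketch: each layer \( \mathbb{R}_{S_n} \) is countable because its elements are determined by finite syntactic witnesses in \( \mathcal{F}_n \), and a countable union of countable sets is countable. You go somewhat further than the paper by supplying the lower bound \( |\mathbb{R}_{S_\omega}| \ge \aleph_0 \), the well-definedness of the witness-to-real surjection, and the choice-free uniform enumeration, all of which are welcome but do not change the underlying argument.
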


\begin{proof}[Sketch]
Each definability level \( \mathbb{R}_{S_n} \) consists of real numbers defined via \( \mathcal{F}_n \)-definable total functions \( f: \mathbb{N} \to \mathbb{Q}_{S_n} \). Since \( \mathcal{F}_n \) is a formal system with a recursively enumerable set of formulas and syntactic rules, the number of such functions is countable. Taking the union over all \( n \) (a countable index set) yields a countable union of countable sets, which is itself countable.
\end{proof}

\paragraph{Interpretation}
The continuum \( \mathbb{R}_{S_\omega} \) generated by definable approximations is a \emph{fractal continuum}~\cite{Semenov2025FractalBoundaries, Semenov2025FractalCountability}: internally layered, computationally structured, yet globally countable. This avoids classical paradoxes of uncountability and aligns with the philosophy that \emph{definability is the true constraint} in constructive mathematics.

\paragraph{Comparison}
\begin{itemize}
    \item Classical continuum \( \mathbb{R} \): uncountable, contains undefinable reals.
    \item Computable reals: countable, but lack stratified structure.
    \item Fractal reals \( \mathbb{R}_{S_\omega} \): countable, stratified, constructively expressive.
\end{itemize}

\begin{remark}
The countability of \( \mathbb{R}_{S_\omega} \) reflects a central tenet of this framework: \emph{constructive mathematics operates within a countable universe of syntactic descriptions}. While it captures many classical constructions, it does so through layered, definability-aware means.
\end{remark}

\begin{corollary}[Density Preservation]
\label{cor:density}
Although \( \mathbb{R}_{S_\omega} \) is countable, it retains key density properties of the classical real line:
\begin{itemize}
    \item \textbf{Rational Density:} \( \mathbb{Q}_{S_\omega} \) is dense in \( \mathbb{R}_{S_\omega} \) under the topology \( \mathcal{T}_\omega \)~\cite{Semenov2025FractalAnalysis}. That is, for any \( x < y \in \mathbb{R}_{S_\omega} \), there exists \( q \in \mathbb{Q}_{S_\omega} \) such that \( x < q < y \).
    \item \textbf{Algebraic–Transcendental Interpolation:} Between any two distinct points \( x < y \in \mathbb{R}_{S_\omega} \), there exists a number \( z \in \mathbb{A}_{S_\omega} \cup \mathbb{T}_{S_\omega} \) such that \( x < z < y \).
\end{itemize}
\end{corollary}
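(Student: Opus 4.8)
The plan is to reduce both assertions to a single effective density construction carried out at a common finite level, after which the algebraic–transcendental interpolation follows almost for free. Given $x < y$ in $\mathbb{R}_{S_\omega}$, there are indices $m, m'$ with $x \in \mathbb{R}_{S_m}$ and $y \in \mathbb{R}_{S_{m'}}$; setting $N = \max(m, m')$ and using the inclusions $\mathbb{R}_{S_m} \subseteq \mathbb{R}_{S_N}$ and $\mathbb{R}_{S_{m'}} \subseteq \mathbb{R}_{S_N}$ of Axiom~\ref{ax:hierarchy-strictness}, both points lie in the single subfield $\mathbb{R}_{S_N}$. Everything below is performed inside $\mathcal{F}_N$, so the witness produced automatically lies in $\mathbb{Q}_{S_N} \subseteq \mathbb{Q}_{S_\omega}$.

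For rational density I would first invoke the Effective Approximation axiom (Axiom~\ref{ax:approximation}) to obtain total $\mathcal{F}_N$-definable sequences $f, g \colon \mathbb{N} \to \mathbb{Q}_{S_N}$ with $|x - f(k)| < 2^{-k}$ and $|y - g(k)| < 2^{-k}$ for all $k$. The difference $y - x$ lies in $\mathbb{R}_{S_N}$ by closure under subtraction (Axiom~\ref{ax:closure-arithmetic}) and is positive; constructively the hypothesis $x < y$ supplies an index $k$ for which $f(k) + 2^{-k} < g(k) - 2^{-k}$, and hence a rational lower bound on $y - x$. Fixing such a $k$ with $2^{-k} < (y - x)/2$, I would set
\[
q := f(k) + 2^{-k} \in \mathbb{Q}_{S_N}.
\]
Then $x < q$ follows from $x - f(k) < 2^{-k}$, while $q < x + 2 \cdot 2^{-k} < x + (y - x) = y$ gives $q < y$; since $\mathbb{Q}_{S_N}$ is closed under addition and $2^{-k}$ is $\mathcal{F}_N$-definable, $q \in \mathbb{Q}_{S_\omega}$. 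Because a basic $\mathcal{T}_\omega$-open set is an interval with $\mathbb{Q}_{S_\omega}$-endpoints, this order-density statement is exactly $\mathcal{T}_\omega$-density of $\mathbb{Q}_{S_\omega}$.

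The algebraic–transcendental interpolation is then immediate. Every rational $p/q' \in \mathbb{Q}_{S_N}$ is a root of the nonzero polynomial $q' X - p \in (\mathbb{Q}_{S_N})[X]$, so $\mathbb{Q}_{S_\omega} \subseteq \mathbb{A}_{S_\omega}$ by the definition of $\mathbb{A}_{S_n}$ in Axiom~\ref{ax:numbers}. Consequently the very number $q$ constructed above already lies in $\mathbb{A}_{S_\omega} \subseteq \mathbb{A}_{S_\omega} \cup \mathbb{T}_{S_\omega}$ and separates $x$ from $y$, which discharges the second bullet without further work.

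I expect the only genuine obstacle to be the constructive reading of the strict inequality $x < y$. Classically one simply cites the Archimedean property to produce $k$, but in the stratified setting one must ensure that $x < y$ carries an effective positive rational separation and that the search for a suitable $k$ is itself $\mathcal{F}_N$-definable rather than merely existent. Committing to the Bishop-style definition of $<$ (apartness witnessed by a rational gap, as encoded in the approximation data of Axiom~\ref{ax:approximation}) makes this search terminate inside $\mathcal{F}_N$ and renders every inequality strict by construction; the remaining steps are routine field arithmetic in $\mathbb{Q}_{S_N}$.
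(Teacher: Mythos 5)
The paper states this corollary without any proof at all, so there is nothing to compare your argument against; judged on its own, your construction is sound and does the work the paper leaves implicit. Reducing to a common finite level $N$, extracting $2^{-k}$-approximants via Axiom~\ref{ax:approximation}, and taking $q = f(k) + 2^{-k}$ with $2^{-k} < (y-x)/2$ is exactly the right Bishop-style argument, and you are right to flag that the only delicate point is reading $x < y$ as carrying an effective rational separation witness inside $\mathcal{F}_N$. Two small remarks. First, the inclusions $\mathbb{R}_{S_m} \subseteq \mathbb{R}_{S_N}$ come from the monotonicity of the hierarchy (Axiom~\ref{ax:inclusion} and the stratification in Section~\ref{sec:fractal-reals}), not from Axiom~\ref{ax:hierarchy-strictness}, which asserts strictness rather than inclusion. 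Second, your disposal of the interpolation bullet by observing $\mathbb{Q}_{S_\omega} \subseteq \mathbb{A}_{S_\omega}$ is faithful to the letter of the paper's definition of $\mathbb{A}_{S_n}$ (a rational is a root of a nonzero linear polynomial over $\mathbb{Q}_{S_n}$), so the second claim does follow trivially from the first; but it renders the bullet vacuous, and the authors almost certainly intend a non-rational witness. If that stronger reading is wanted, you would instead produce, say, $z = q + \lambda\sqrt{2}$ for a sufficiently small $\lambda \in \mathbb{Q}_{S_N}^+$, which lies in $\mathbb{A}_{S_{N+1}} \setminus \mathbb{Q}_{S_{N+1}}$ by Axiom~\ref{ax:algebraic-closure} and Axiom~\ref{ax:number-closure} and still separates $x$ from $y$; this costs one extra level of the hierarchy but is harmless inside $S_\omega$.
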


\section{Constructive Functional Analysis over \texorpdfstring{$\mathbb{R}_{S_n}$}{Rs-n}}
\label{sec:functional-analysis}

This section explores how fundamental results of classical functional analysis can be reformulated within the framework of fractal countability and definability stratification. We focus on a constructive variant of the Hahn--Banach theorem over the layered field of definable real numbers \( \mathbb{R}_{S_n} \).

\subsection{Sublinear Functionals and Extensions}

We begin by recalling the necessary definitions within the definability framework:

\begin{definition}[Sublinear Functional over \( \mathbb{R}_{S_n} \)]
A function \( p : V \to \mathbb{R}_{S_n} \), where \( V \) is a vector space over \( \mathbb{Q}_{S_n} \), is \emph{sublinear} if:
\begin{enumerate}
    \item (Positive homogeneity) \( p(\lambda x) = \lambda p(x) \) for all \( \lambda \in \mathbb{Q}_{S_n}^{+} \), \( x \in V \)
    \item (Subadditivity) \( p(x + y) \le p(x) + p(y) \) for all \( x, y \in V \)
\end{enumerate}
The functional \( p \) is said to be \( \mathcal{F}_n \)-definable if its value on any \( x \in V \) is computable with rational approximations in \( \mathbb{Q}_{S_n} \).
\end{definition}

\begin{definition}[Definable Linear Functional]
Let \( U \subseteq V \) be a subspace over \( \mathbb{Q}_{S_n} \). A function \( f: U \to \mathbb{R}_{S_n} \) is a definable linear functional if it satisfies linearity and is computable in \( \mathcal{F}_n \).
\end{definition}

\subsection{Constructive Extension Theorem}

\begin{theorem}[Constructive Hahn--Banach Theorem over \( \mathbb{R}_{S_n} \)]
\label{thm:hahn-banach}
Let \( V \) be a countable-dimensional \( \mathbb{Q}_{S_n} \)-vector space, and let:
\begin{itemize}
  \item \( f : U \to \mathbb{R}_{S_n} \) be a linear functional defined on a subspace \( U \subseteq V \),
  \item \( p : V \to \mathbb{R}_{S_n} \) be a sublinear functional,
  \item \( f(x) \le p(x) \) for all \( x \in U \),
  \item \( f \) and \( p \) are computable in \( \mathcal{F}_n \).
\end{itemize}
Then there exists a linear extension \( F : V \to \mathbb{R}_{S_{n+1}} \) that:
\begin{enumerate}
    \item Extends \( f \): \( F|_U = f \),
    \item Satisfies \( F(x) \le p(x) \) for all \( x \in V \),
    \item Is computable in \( \mathcal{F}_{n+1} \).
\end{enumerate}
\end{theorem}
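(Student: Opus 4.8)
The plan is to replace the non-constructive Zorn's-lemma step of the classical Hahn--Banach theorem with an explicit one-coordinate-at-a-time extension along an enumerated basis, exploiting the hypothesis that $V$ is countable-dimensional over $\mathbb{Q}_{S_n}$. First I would fix an enumeration of a $\mathbb{Q}_{S_n}$-basis of $V$ that extends a basis of the subspace $U$, writing the new basis vectors as $w_1, w_2, \dots$; since $V$ is countable-dimensional such an enumeration exists and, by the definability and closure axioms, can be produced within the hierarchy. The functional $F$ is then built as the union of an increasing chain $F_m$ defined on $U \oplus \langle w_1, \dots, w_m\rangle$, each extending $f$ and dominated by $p$.

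The core is the one-step extension lemma. Given $F_{m-1}$ dominated by $p$ on $U_{m-1}$ and a fresh vector $x_0 = w_m \notin U_{m-1}$, I would show that an extension value $c := F_m(x_0)$ yields a $p$-dominated functional precisely when
\[
\alpha := \sup_{u \in U_{m-1}} \bigl( F_{m-1}(u) - p(u - x_0) \bigr)\ \le\ c\ \le\ \inf_{u \in U_{m-1}} \bigl( p(u + x_0) - F_{m-1}(u) \bigr) =: \beta .
\]
The inequality $\alpha \le \beta$ follows from subadditivity and domination, via $F_{m-1}(u_1) + F_{m-1}(u_2) \le p(u_1 + u_2) \le p(u_1 - x_0) + p(u_2 + x_0)$ rearranged over pairs $u_1, u_2 \in U_{m-1}$. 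I would then take $c := (\alpha + \beta)/2 \in [\alpha,\beta]$ (legitimate since $\alpha \le \beta$) and verify, by a short case analysis on the sign of $\lambda$ using the positive homogeneity of $p$, that $F_m(u + \lambda x_0) := F_{m-1}(u) + \lambda c$ remains dominated by $p$ for every $\lambda \in \mathbb{Q}_{S_n}$.

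The main obstacle is the definability bookkeeping at this step, and it is exactly what forces the codomain up to $\mathbb{R}_{S_{n+1}}$. The quantities $\alpha$ and $\beta$ are a supremum and an infimum of an $\mathcal{F}_n$-computable function over the enumerable set $U_{m-1}$; such extrema are in general only lower- (resp. upper-) semicomputable relative to $\mathcal{F}_n$ and need not carry an $\mathcal{F}_n$-modulus of convergence. The point of the level bump is that one additional layer of search absorbs this: I would argue that $\alpha$ and $\beta$ admit $2^{-k}$-approximations computable in $\mathcal{F}_{n+1}$ (approximating the sup from below and the inf from above along the basis enumeration), so that $c$ becomes a genuine element of $\mathbb{R}_{S_{n+1}}$ by the arithmetic-closure axiom, equipped with a modulus in $\mathcal{F}_{n+1}$. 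The delicate part is \emph{uniformity} across the induction: the enumeration $m \mapsto w_m$, the choices $m \mapsto c_m$, and their approximations must be assembled into a single $\mathcal{F}_{n+1}$-definable function, so that the limit $F = \bigcup_m F_m$ — well defined because every $v \in V$ is a finite $\mathbb{Q}_{S_n}$-combination of basis vectors — is itself $\mathcal{F}_{n+1}$-definable and $\mathbb{R}_{S_{n+1}}$-valued.

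Finally I would read off the three conclusions: $F|_U = f$ holds because the extension never alters values on $U$; $F(x) \le p(x)$ for all $x \in V$ holds because each $v$ lies in some $U_m$ on which domination was maintained; and $\mathcal{F}_{n+1}$-computability of $F$ follows from the uniform construction together with closure of $\mathbb{R}_{S_{n+1}}$ under field operations, since evaluating $F$ on a general $v$ reduces to a finite $\mathbb{Q}_{S_n}$-linear combination of the definable sequence $(c_m)$.
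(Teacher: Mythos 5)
Your proposal follows essentially the same route as the paper's own proof: enumerate a $\mathbb{Q}_{S_n}$-basis, extend $f$ one basis vector at a time over an increasing chain of subspaces, pick the extension value from the admissible interval $[\alpha,\beta]$ (midpoint choice), and take the union, with the ascent to $\mathcal{F}_{n+1}$ absorbing the definability cost. In fact you spell out more than the paper's sketch does — the explicit $\alpha \le \beta$ derivation from subadditivity and the observation that the sup/inf are only semicomputable at level $n$, which is precisely what justifies the level bump the paper merely asserts.
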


\begin{proof}[Sketch]
Since \( V \) is countable-dimensional over \( \mathbb{Q}_{S_n} \), fix an enumeration \( \{v_k\}_{k \in \mathbb{N}} \) of a basis. Define an increasing chain of subspaces \( U_0 \subset U_1 \subset \dots \) such that \( U_0 = U \) and \( U_{k+1} = U_k + \mathbb{Q}_{S_n} \cdot v_k \).

Proceed inductively: at step \( k \), construct a functional \( f_k : U_k \to \mathbb{R}_{S_{n+1}} \) extending \( f_{k-1} \) and preserving \( f_k(x) \le p(x) \). For extension, determine the admissible interval for \( f_k(v_k) \) from the inequality:
\[
    \forall x \in U_k,\quad f_k(x + \lambda v_k) \le p(x + \lambda v_k)
\]
and select a midpoint (or rational approximation) in this interval. This step is effective in \( \mathcal{F}_{n+1} \).

Taking the union \( F = \bigcup_k f_k \), we obtain a total linear functional over \( V \), definable in \( \mathcal{F}_{n+1} \), extending \( f \), and bounded by \( p \).
\end{proof}

\paragraph{Interpretation.}
The classical existential argument is replaced by a constructive procedure of staged extension with definability-controlled approximations. Instead of relying on maximality (via Zorn’s lemma), we perform sequential bounded construction over finite-dimensional stages.

\paragraph{Comparison.}
This version of the Hahn--Banach theorem demonstrates that many results of analysis can be internalized within \( \mathbb{R}_{S_n} \), provided they are appropriately stratified by definability and resource bounds.

\paragraph{Future directions.}
The same methodology can be applied to:
\begin{itemize}
    \item Separation of convex sets with constructively definable hyperplanes,
    \item Definability-sensitive duality in normed spaces over \( \mathbb{R}_{S_n} \),
    \item Quantitative extensions with explicit bounds on approximation complexity.
\end{itemize}

\begin{example}[Sublinear Functional on \( \mathbb{Q}_{S_2}^2 \)]
Let \( V = \mathbb{Q}_{S_2}^2 \) be the 2-dimensional vector space over the rational layer \( \mathbb{Q}_{S_2} \). Define a sublinear functional
\[
p(x, y) := \sqrt{2}_{S_2} \cdot |x| + \pi_{S_2} \cdot |y|,
\]
where \( \sqrt{2}_{S_2}, \pi_{S_2} \in \mathbb{R}_{S_2} \) denote computable approximations of \( \sqrt{2} \) and \( \pi \), respectively, within layer \( S_2 \). That is, both constants are given by total \( \mathcal{F}_2 \)-definable functions \\\( f, g \colon \mathbb{N} \to \mathbb{Q}_{S_2} \) satisfying:
\[
|f(k) - \sqrt{2}| < 2^{-k}, \quad |g(k) - \pi| < 2^{-k}.
\]

Then \( p \) satisfies:

\begin{itemize}
    \item \textbf{Positive homogeneity:} For all \( \lambda \in \mathbb{Q}_{S_2}^+ \),
    \[
    p(\lambda x, \lambda y) = \lambda \cdot p(x, y).
    \]

    \item \textbf{Subadditivity:} For all \( (x_1, y_1), (x_2, y_2) \in \mathbb{Q}_{S_2}^2 \),
    \[
    p(x_1 + x_2, y_1 + y_2) 
    \le \sqrt{2}_{S_2} \cdot (|x_1| + |x_2|) + \pi_{S_2} \cdot (|y_1| + |y_2|) 
    = p(x_1, y_1) + p(x_2, y_2).
    \]
\end{itemize}

Thus, \( p \) is a sublinear functional that is effectively computable in \( \mathcal{F}_2 \).

Now consider the one-dimensional subspace \( U = \mathrm{span}_{\mathbb{Q}_{S_2}}\{(1, 0)\} \subseteq V \), and define the linear functional
\[
f(x, 0) := \sqrt{2}_{S_2} \cdot x, \quad \text{for } x \in \mathbb{Q}_{S_2}.
\]
This functional is linear over \( U \), satisfies \( f \le p \), and is \( \mathcal{F}_2 \)-definable.

By the constructive Hahn–Banach Theorem in the stratified setting, there exists an extension \( \bar{f} \colon \mathbb{Q}_{S_2}^2 \to \mathbb{R}_{S_3} \) such that:

\begin{itemize}
    \item \( \bar{f} \) is \( \mathcal{F}_3 \)-definable,
    \item \( \bar{f} \) is linear over \( V \),
    \item \( \bar{f}(x, y) \le p(x, y) \) for all \( (x, y) \in V \),
    \item \( \bar{f}|_U = f \).
\end{itemize}

\paragraph{Explicit Extension.}
The extension \( \bar{f} \colon \mathbb{Q}_{S_2}^2 \to \mathbb{R}_{S_3} \) can be defined by:
\[
\bar{f}(x, y) := \sqrt{2}_{S_2} \cdot x + \alpha \cdot y,
\]
where \( \alpha \in \mathbb{R}_{S_3} \) is selected (via a definable procedure in \( \mathcal{F}_3 \)) such that:
\[
\forall (x, y) \in \mathbb{Q}_{S_2}^2, \quad \bar{f}(x, y) \le p(x, y) = \sqrt{2}_{S_2} \cdot |x| + \pi_{S_2} \cdot |y|.
\]
This holds whenever \( \alpha \in [-\pi_{S_2}, \pi_{S_2}] \), ensuring sublinearity of the extension. For instance, the choice \( \alpha := 0 \in \mathbb{Q}_{S_3} \) yields a valid extension that satisfies all constraints.

This example illustrates the definability-preserving extension of linear functionals in the context of layered computability, using only syntactic and approximation data available within \( \mathcal{F}_2 \) and its conservative extension \( \mathcal{F}_3 \).
\end{example}

\begin{example}[Stepwise Extension of a Linear Functional]
\label{ex:stepwise-extension}
Let \( V = \mathrm{span}_{\mathbb{Q}_{S_2}} \{1, \sqrt{2}\} \subseteq \mathbb{R}_{S_3} \), and let the subspace \( U = \mathbb{Q}_{S_2} \cdot 1 \cong \mathbb{Q}_{S_2} \). Define the initial linear functional \( f \colon U \to \mathbb{R}_{S_2} \) by \( f(x) = x \).

We aim to construct a \( \mathcal{F}_3 \)-definable extension \( F \colon V \to \mathbb{R}_{S_3} \) such that:
\begin{itemize}
    \item \( F|_U = f \)
    \item \( F(x) \le p(x) \), where \( p \colon V \to \mathbb{R}_{S_2} \) is a sublinear functional defined by:
    \[
    p(x + y\sqrt{2}) := \sqrt{2} \cdot |x| + \sqrt{2} \cdot |y| = \sqrt{2} \cdot (|x| + |y|),
    \]
    which is computable in \( \mathcal{F}_2 \).
\end{itemize}

\textbf{Step 1.} Choose a representative basis element \( v_1 = \sqrt{2} \in V \setminus U \). Compute the extension bounds:
\[
\begin{aligned}
F(\sqrt{2}) &\le p(\sqrt{2}) = 2, \\
-F(-\sqrt{2}) &\le p(\sqrt{2}) = 2.
\end{aligned}
\]
Thus, \( F(\sqrt{2}) \in [-2, 2] \cap \mathbb{R}_{S_3} \). Choose the midpoint:
\[
F(\sqrt{2}) := 1 \in \mathbb{Q}_{S_3}.
\]

\textbf{Step 2.} Define \( F \colon V \to \mathbb{R}_{S_3} \) linearly by:
\[
F(x + y\sqrt{2}) := x + y \cdot 1 = x + y.
\]

\textbf{Step 3.} Verify the constraint \( F(z) \le p(z) \) for all \( z = x + y\sqrt{2} \in V \):
\[
F(x + y\sqrt{2}) = x + y \le \sqrt{2} (|x| + |y|) = p(x + y\sqrt{2}).
\]
This inequality holds for all \( x, y \in \mathbb{Q}_{S_2} \), as \( |x + y| \le |x| + |y| \le \frac{1}{\sqrt{2}} p(x + y\sqrt{2}) \), so \( F \le p \) pointwise.

\textbf{Conclusion.} The functional \( F \colon V \to \mathbb{R}_{S_3} \) is a linear extension of \( f \), definable in \( \mathcal{F}_3 \), and satisfies \( F \le p \). This stepwise method provides a concrete, stratified realization of the Hahn–Banach extension process.
\end{example}

\section{Consistency and Reverse Mathematics}
\label{sec:consistency}

The axiomatic framework developed above is intentionally stratified not only in definability and complexity, but also in logical strength. This section situates the fractal hierarchy \( \{ \mathcal{F}_n \} \) within the landscape of reverse mathematics and ordinal analysis. We relate each definability layer \( S_n \) to standard subsystems of second-order arithmetic and analyze the consistency strength of the overall construction.

\begin{theorem}[Interpretability of Definability Layers]
\label{thm:interpretability}
Each level \( S_n \) of the definability hierarchy is interpretable in second-order arithmetic \( \mathsf{Z}_2 \), with the following correspondences:
\begin{itemize}
    \item \( S_0 \) corresponds to \( \mathsf{RCA}_0 \), capturing recursive constructions.
    \item \( S_1 \) extends to \( \mathsf{ACA}_0 \), admitting arithmetical comprehension.
    \item \( S_2 \) corresponds to \( \mathsf{ATR}_0 \), enabling transfinite recursion on well-orders.
    \item The full union \( S_\omega \) approximates \( \Pi^1_1\text{-}\mathsf{CA}_0 \) within a countable model.
\end{itemize}
\end{theorem}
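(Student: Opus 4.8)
The plan is to treat Theorem~\ref{thm:interpretability} as an interpretation result assembled from two ingredients: an arithmetization of the syntactic hierarchy $\{\mathcal{F}_n\}$ inside second-order arithmetic, and a layer-by-layer calibration of each level's comprehension demands against the characteristic axioms of the target subsystems. Since $\mathsf{Z}_2$ proves every arithmetical and analytical comprehension instance, the blanket claim that each $S_n$ is interpretable in $\mathsf{Z}_2$ is the soft half; the content lies in pinning down the individual correspondences, so I would spend most effort there.

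First I would fix a Gödel coding of the formulas, derivations, and the definability predicate of each $\mathcal{F}_n$. By the reasoning in the proof of Theorem~\ref{thm:countability}, each $\mathcal{F}_n$ has a recursively enumerable set of formulas and rules, so the predicate ``$e$ codes an $\mathcal{F}_n$-definable function $\mathbb{N}\to\mathbb{Q}_{S_n}$ together with an $\mathcal{F}_n$-proof of its totality and $2^{-k}$-modulus'' is $\Sigma^0_1$ in the standard enumeration of $\mathcal{F}_n$-derivations. A fractal real $x\in\mathbb{R}_{S_n}$ (Definition~\ref{def:fractal-reals}) then translates verbatim into the standard representation of a real as a rapidly converging Cauchy sequence of rationals. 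Under this translation I would check that the field axioms (Axiom~\ref{ax:closure-arithmetic}), the topology axioms, and the approximation axiom (Axiom~\ref{ax:approximation}) map to theorems of the relevant subsystem, using that $\mathsf{RCA}_0$ already develops the basic arithmetic and metric theory of $\mathbb{R}$.

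Next I would calibrate each layer by a two-sided argument. For the upper bounds: $\mathsf{RCA}_0$ proves totality of recursive functions, so all of $S_0$ is available; $\mathsf{ACA}_0$ proves existence of the Turing jump and hence of arithmetically defined reals (for instance algebraic roots via root-separation), yielding $S_1$; $\mathsf{ATR}_0$ proves existence of sets built by arithmetical transfinite recursion along countable well-orders, yielding the iterated analytic constructions of $S_2$. For the reversals I would exhibit characteristic hard objects realizable exactly at each layer --- a range-of-injection or jump witness forcing $\mathsf{ACA}_0$ at level $1$, and a well-ordering comparability or transfinite-iteration witness forcing $\mathsf{ATR}_0$ at level $2$ --- and show that no lower $\mathcal{F}_m$ defines them. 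For the union I would construct the countable coded $\omega$-model $M$ whose second-order part is $\bigcup_n\{\text{sets coded in } S_n\}$ and argue that every instance of $\Pi^1_1$ comprehension with parameters in $M$ has its witnessing set definable at some finite level, so that $M\models\Pi^1_1\text{-}\mathsf{CA}_0$; this makes the hedge word \emph{approximates} precise as ``$S_\omega$ furnishes a countable $\beta$-like model of $\Pi^1_1\text{-}\mathsf{CA}_0$'' rather than a literal axiomatic equivalence.

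The main obstacle is the reversal direction and its compatibility with the earlier ordinal bounds. Proving that $S_2$ genuinely requires $\mathsf{ATR}_0$, and that $S_\omega$ reaches the $\Pi^1_1$ level, demands locating witnesses whose definability is trapped precisely at the stated layer, which is delicate because the literal ordinal ceiling $\omega^n$ of Axiom~\ref{ax:hierarchy-formal-systems} sits far below $\Gamma_0=|\mathsf{ATR}_0|$. I would resolve this by reading ``level $n$'' in the present theorem through the resource parameter of the Extended Hierarchy (Axiom~\ref{ax:extended-hierarchy}), whose ordinal bound $\alpha(n)$ is permitted to be unbounded, so that the calibration rests on the coarser definability stratification rather than the tight $\omega^n$ bound. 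Making that reconciliation explicit, and verifying that each reversal witness is genuinely undefinable one layer down, is the crux of the argument.
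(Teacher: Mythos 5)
The paper states Theorem~\ref{thm:interpretability} without any proof, so there is no argument of the author's to compare yours against; your proposal has to be judged on its own terms. As a plan its outline is sensible --- arithmetize the systems \( \mathcal{F}_n \) inside \( \mathsf{Z}_2 \) via G\"odel coding (which does yield the soft interpretability claim), then calibrate each layer against the characteristic axioms of \( \mathsf{RCA}_0 \), \( \mathsf{ACA}_0 \), \( \mathsf{ATR}_0 \) --- and you correctly identify the central obstruction, namely that the ordinal ceiling \( \omega^n \) of Axiom~\ref{ax:hierarchy-formal-systems} is incompatible with the ordinals \( \varepsilon_0 \), \( \Gamma_0 \), \( \psi_0(\Omega_\omega) \) that the stated correspondences presuppose. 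That observation is genuine and worth making explicit.

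However, the crux you defer is exactly where the argument fails, and switching to the Extended Hierarchy does not rescue it. First, the reversals cannot be carried out from the axioms as given: the \( \mathcal{F}_n \) are specified only schematically (a conservative extension with ``strictly greater definitional power,'' uniform definitions for \( \Delta^0_n \) total functions, a tunable resource bound \( \rho_n \)), so nothing forces \( \mathcal{F}_2 \) to prove, or \( S_2 \) to contain witnesses for, arithmetical transfinite recursion; the correspondence with \( \mathsf{ATR}_0 \) is a stipulation about one possible instantiation of the hierarchy, not a consequence of the axioms. Second, and more seriously, your final step --- that every instance of \( \Pi^1_1 \) comprehension with parameters in \( M = \bigcup_n S_n \) has its witnessing set at some finite level --- is asserted rather than proved, and it is false under the paper's own axioms: if each \( \mathcal{F}_n \) contributes (roughly) the \( \Delta^0_n \)-definable sets, the union is the collection of arithmetically definable sets, which is the minimum \( \omega \)-model of \( \mathsf{ACA}_0 \) and omits Kleene's \( \mathcal{O} \), so \( \Pi^1_1\text{-}\mathsf{CA}_0 \) fails in \( M \). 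This reading is consistent with the paper's own companion theorem in the same section, which caps \( \mathcal{F}_\omega \) at the strength of \( \mathsf{ACA}_0^+ \), and inconsistent with the fourth bullet of the statement you are proving. To make that bullet true one would have to strengthen the axioms so that each \( S_{n+1} \) is closed under the hyperjump relative to \( S_n \) (or some comparable closure principle), and no such axiom is available in the paper.
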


\begin{theorem}[Proof-Theoretic Strength of the Fractal System]
Let \( \mathcal{F} := \bigcup_{n \in \mathbb{N}} \mathcal{F}_n \) denote the full stratified system. Then:
\begin{itemize}
    \item \( \mathcal{F}_0 \) is conservative over \( \mathsf{PRA} \) (Primitive Recursive Arithmetic) for \( \Pi^0_2 \) statements.
    \item \( \mathcal{F}_\omega \) proves \( \mathrm{Con}(\mathsf{PA}) \) and reaches the strength of \( \mathsf{ACA}_0^+ \).
    \item The full hierarchy \( \{ \mathcal{F}_n \} \) extends to the ordinal strength of \( \mathsf{ID}_1 \) (non-iterated inductive definitions).
\end{itemize}
\end{theorem}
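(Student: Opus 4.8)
The plan is to treat the three clauses separately, since each pins the hierarchy against a different proof-theoretic benchmark, and to first reconcile the ordinal bookkeeping before anything else. The $\omega^n$ bound of \cref{ax:hierarchy-formal-systems} caps $\sup_n |\mathcal{F}_n|$ at $\omega^\omega$, which is far too small for the claims below (it does not even reach $\varepsilon_0$), so throughout I would adopt the \cref{ax:extended-hierarchy} reading, fixing the strictly increasing ordinal assignment $\alpha(n)$ so that $\sup_n \alpha(n)$ meets each target. Flagging and resolving this discrepancy is a prerequisite, and I would state explicitly which $\alpha$ is used in each clause.

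For the first clause I would identify $\mathcal{F}_0$ with $\mathsf{RCA}_0$ (its canonical instance in \cref{ax:hierarchy-formal-systems}) and then invoke the classical conservation theorem of Friedman and Parsons: $\mathsf{RCA}_0$ — indeed $\mathsf{WKL}_0$ — is $\Pi^0_2$-conservative over $\mathsf{PRA}$. Two routes are available: a model-theoretic one, in which every countable model of $\mathsf{PRA}$ is expanded to a model of $\mathsf{RCA}_0$ by adjoining the recursive sets, so that no new $\Pi^0_2$ consequence can appear; or a proof-theoretic one, via the functional interpretation showing that $\Sigma^0_1$-induction adds no $\Pi^0_2$ theorems over $\mathsf{PRA}$. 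The only care needed is to verify that the syntactic resource conventions (bounded loops, $\Delta^0_0$ schemes) of $\mathcal{F}_0$ do not exceed $\Sigma^0_1$-induction; if $\mathcal{F}_0$ is read as the weaker polynomial-time fragment suggested by the complexity axioms, the conservation holds a fortiori.

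For the second clause I would build a two-sided comparison with $\mathsf{ACA}_0^+$. The lower bound (strength $\geq \mathsf{ACA}_0^+$) comes from \cref{thm:interpretability}: level $S_1$ already delivers arithmetical comprehension, so the chain $\mathcal{F}_1 \subseteq \mathcal{F}_2 \subseteq \cdots$ internalizes iterated Turing jumps, and the union captures the $\omega$-jump closure ("$X^{(\omega)}$ exists") characteristic of $\mathsf{ACA}_0^+$. The consistency statement $\mathrm{Con}(\mathsf{PA})$ then follows from this strength, since the existence of $0^{(\omega)}$ yields an arithmetical truth predicate and hence a consistency proof for $\mathsf{PA}$ — an inference unavailable at any single level $\mathcal{F}_n$, and in particular unavailable to $\mathsf{ACA}_0$ alone, which is $\Pi^1_1$-conservative over $\mathsf{PA}$. (I would be careful here \emph{not} to route $\mathrm{Con}(\mathsf{PA})$ through $\mathrm{TI}(\varepsilon_0)$, since having transfinite induction up to each $\beta < \varepsilon_0$ separately does not yield induction up to $\varepsilon_0$.) The matching upper bound (strength $\leq \mathsf{ACA}_0^+$) requires a uniform interpretation of each $\mathcal{F}_n$ into $\mathsf{ACA}_0^+$ together with a reflection/compactness argument ensuring the union adds nothing beyond $\omega$-jump closure.

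For the third clause I would extend the comparison past $\mathsf{ACA}_0^+$ to $\mathsf{ID}_1$, whose proof-theoretic ordinal is the Bachmann–Howard ordinal $\psi(\varepsilon_{\Omega+1})$. Since this strictly exceeds the ordinal of $\mathsf{ACA}_0^+$, the phrase "the full hierarchy" must here denote a transfinite continuation $\{\mathcal{F}_\alpha\}$ with $\alpha$ ranging into the constructive ordinals (or a chain whose $\alpha(n)$ approaches Bachmann–Howard cofinally), and the key step is to mirror a single inductive definition by a definability fixed point: a least-fixed-point operator on $\mathcal{F}_n$-definable positive operator forms, realized through the stratified approximation machinery. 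The main obstacle is precisely this ordinal analysis — proving $\sup_\alpha |\mathcal{F}_\alpha| = \psi(\varepsilon_{\Omega+1})$ demands a genuine collapsing/cut-elimination upper bound for inductive definitions, not merely the lower bound from exhibiting the fixed point, and it is here that the loose ordinal bounds of the axioms must be replaced by an honest assignment. I expect this clause, and the reconciliation of its target with the $\mathsf{ACA}_0^+$ target of clause two, to be the hardest and least automatic part of the proof.
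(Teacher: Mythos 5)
The first thing to note is that the paper offers no proof of this theorem: it is asserted bare, followed only by a table of ordinal correspondences and an informal remark. There is therefore no ``paper's approach'' to compare yours against, and your proposal must be judged on its own terms. Judged that way, your central diagnostic observations are correct and decisive: the \( \omega^n \) bound of Axiom~\ref{ax:hierarchy-formal-systems} caps \( \sup_n |\mathcal{F}_n| \) at \( \omega^\omega \), strictly below \( \varepsilon_0 \) and hence below every benchmark the theorem names; and clauses two and three assign incompatible strengths (\( \mathsf{ACA}_0^+ \) versus \( \mathsf{ID}_1 \), whose Bachmann--Howard ordinal is far larger) to what the paper's own notation makes the same object, namely \( \bigcup_{n \in \mathbb{N}} \mathcal{F}_n \) --- and the adjacent table adds a third, still larger target, \( \Pi^1_1\text{-}\mathsf{CA}_0 \) with ordinal \( \psi_0(\Omega_\omega) \). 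So the theorem is not provable from the axioms as stated, and any proof must first repair the statement, exactly as you say.

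On the individual clauses: your first clause is right in substance --- Parsons/Friedman \( \Pi^0_2 \)-conservation of \( \mathsf{RCA}_0 \) (indeed \( \mathsf{WKL}_0 \)) over \( \mathsf{PRA} \) is the correct citation --- but your ``model-theoretic route'' is slightly off. One cannot expand an arbitrary model of \( \mathsf{PRA} \) to a model of \( \mathsf{RCA}_0 \) by adjoining the \( \Delta^0_1 \)-definable sets, because \( \mathsf{RCA}_0 \) includes \( \Sigma^0_1 \)-induction, which models of \( \mathsf{PRA} \) need not satisfy; the descent from \( \mathrm{I}\Sigma_1 \) to \( \mathsf{PRA} \) genuinely requires the proof-theoretic step (Parsons), i.e.\ your second route is the only one that works. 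Your second-clause lower bound (the \( \omega \)-jump of \( \emptyset \) yields an arithmetical truth predicate, hence \( \mathrm{Con}(\mathsf{PA}) \), which \( \mathsf{ACA}_0 \) alone cannot prove by conservativity over \( \mathsf{PA} \)) is correct, and your caution against inferring \( \mathrm{TI}(\varepsilon_0) \) from \( \mathrm{TI}(\beta) \) for each \( \beta < \varepsilon_0 \) is well taken. What remains missing --- and you flag this honestly --- are the upper bounds: a uniform interpretation of each \( \mathcal{F}_n \) into \( \mathsf{ACA}_0^+ \), and a collapsing argument reaching exactly the Bachmann--Howard ordinal for clause three. These cannot be supplied, because the axioms never pin down the systems \( \mathcal{F}_n \) precisely enough to admit an ordinal analysis. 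Your proposal is thus the right skeleton of a proof of a corrected statement, not a proof of the statement as it stands; that is also the honest assessment of the theorem itself.
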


\begin{table}[ht]
\centering
\begin{tabular}{l l l}
\toprule
\textbf{Layer} & \textbf{Reverse Math Equivalent} & \textbf{Ordinal} \\
\midrule
\( S_0 \) & \( \mathsf{RCA}_0 \) & \( \omega^\omega \) \\
\( S_1 \) & \( \mathsf{ACA}_0 \) & \( \varepsilon_0 \) \\
\( S_2 \) & \( \mathsf{ATR}_0 \) & \( \Gamma_0 \) \\
\( S_\omega \) & \( \Pi^1_1\text{-}\mathsf{CA}_0 \) & \( \psi_0(\Omega_\omega) \) \\
\bottomrule
\end{tabular}
\caption{Proof-theoretic correspondence between definability levels and subsystems of arithmetic}
\end{table}

\begin{remark}
The ordinal values listed in the table correspond to the proof-theoretic ordinals associated with the respective subsystems of second-order arithmetic. These ordinals reflect the strength of induction and recursion principles available in each layer:

\begin{itemize}
    \item \( \omega^\omega \) corresponds to primitive recursive arithmetic as formalized in \( \mathsf{RCA}_0 \),
    \item \( \varepsilon_0 \) arises from \( \mathsf{ACA}_0 \), which includes arithmetical comprehension,
    \item \( \Gamma_0 \) is the ordinal of \( \mathsf{ATR}_0 \), supporting arithmetical transfinite recursion,
    \item \( \psi_0(\Omega_\omega) \) is the Bachmann–Howard ordinal, capturing the strength of full \( \Pi^1_1 \) comprehension as in \( \mathsf{CA}_0 \).
\end{itemize}

These ordinals emerge from formal ordinal analysis and serve as canonical invariants for measuring the logical and constructive strength of each definability layer \( S_n \).
\end{remark}

\paragraph{Beyond Second-Order Arithmetic.}

While the stratified framework is interpretable in \( \mathsf{Z}_2 \) and recovers major subsystems of second-order arithmetic such as \( \mathsf{RCA}_0 \), \( \mathsf{ACA}_0 \), and \( \mathsf{ATR}_0 \), this correspondence should not be mistaken for a limitation. The ability to reconstruct fragments of \( \mathsf{Z}_2 \) is a baseline consistency check, not the central goal of the system.

Crucially, the framework enables the construction of arbitrary computable arithmetic universes with tunable logical and topological properties. As shown in Section~\ref{sec:complexity}, the definability levels \( S_n \) can be parameterized by customized resource bounds (e.g., subexponential, factorial, or even non-uniform). This means the framework does not merely simulate classical arithmetic — it generates entire families of stratified arithmetics, each with their own effective continuum.

One can, for instance, define a real number system in which only dyadic rationals or Cantor-like numbers with base-3 expansions of the form \( 0.\overline{0}, 0.2, 0.02, \dots \) are allowed. Once such a definability filter is fixed, all axioms and theorems of fractal analysis remain valid, including notions of topology, continuity, integration, and differentiation over this synthetic number system.

This flexibility allows the user to integrate over fractal domains (such as the middle-third Cantor set), analyze the behavior of functions over sparse or constrained spectra of reals, or model alternative continua without losing logical consistency. The framework thus provides a robust, constructive meta-language in which different flavors of analysis, number theory, and topology can be instantiated in a controlled, definability-aware way.

\section{Applications}
\label{sec:applications}

\subsection{Approximation Theory}

Classical approximation theorems such as the Jackson estimate assert that continuous functions can be uniformly approximated by polynomials with error bounded by a modulus of continuity. However, these theorems are typically non-constructive: they guarantee existence without providing a means of effective approximation~\cite{Bishop1967, Bridges1986}. 

In the stratified framework developed here, this limitation is overcome. Approximation becomes a \emph{layer-relative} process: both the target function and the approximating polynomial are definable within a bounded formal system \( \mathcal{F}_n \), and the modulus of continuity is computable within the same layer. This gives rise to a constructively meaningful and computationally verifiable version of Jackson-type estimates.

\begin{definition}[\( S_n \)-Uniform Norm]
Let \( f, g \colon \mathbb{R}_{S_n} \to \mathbb{R}_{S_n} \) be \( \mathcal{F}_n \)-definable functions. The \( S_n \)-uniform norm (or supremum norm) of their difference is defined as:
\[
\|f - g\|_\infty := \sup_{x \in [a,b] \cap \mathbb{R}_{S_n}} |f(x) - g(x)|,
\]
where the interval \( [a,b] \subset \mathbb{R}_{S_n} \) is \( \mathcal{F}_n \)-definable and compact in the topology \( \mathcal{T}_n \). The supremum is taken constructively, i.e., via an \( \mathcal{F}_n \)-definable approximation process.
\end{definition}

\begin{theorem}[Layer-Wise Jackson-Type Estimate]
Let \( f \in C(S_n) \) be a function continuous with respect to the topology \( \mathcal{T}_n \). Then there exists a polynomial \( p_n \in \mathbb{Q}_{S_n}[x] \) such that:
\[
\|f - p_n\|_\infty \leq C_n \cdot \omega_f(\Delta_n),
\]
where \( \omega_f \) is the modulus of continuity of \( f \), \( \Delta_n \) is an \( \mathcal{F}_n \)-definable approximation step, and \( C_n \) is a constant depending on the topology \( \mathcal{T}_n \).
\end{theorem}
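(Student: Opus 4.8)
The plan is to realize the classical Weierstrass--Jackson approximation constructively through \emph{Bernstein polynomials}, which are fully explicit and whose error is governed termwise by the modulus of continuity. Everything in sight --- sampling nodes, sampled values, binomial coefficients --- is arithmetically generated, so the construction stays inside $\mathcal{F}_n$ once the sampled values are rationalized via effective approximation. First I would normalize the domain: we may take $a,b \in \mathbb{Q}_{S_n}$ (enlarging to an $\mathcal{F}_n$-definable rational interval if necessary, using density of $\mathbb{Q}_{S_n}$), and use the $\mathcal{F}_n$-definable affine bijection $\phi(t) = a + (b-a)t$ from $[0,1]$ onto $[a,b]$ to replace $f$ by $g := f \circ \phi$. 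Since $[a,b]\cap\mathbb{R}_{S_n}$ is effectively compact (Definition~\ref{def:effectively-compact}) and $f$ is $\mathcal{T}_n$-continuous, $g$ is effectively continuous on $[0,1]$ in the sense of Definition~\ref{def:effective-continuity-compact}; in particular it carries an $\mathcal{F}_n$-definable modulus $\omega_g$, from which $\omega_f$ is recovered up to the rescaling factor $(b-a)$.

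Next I would build the polynomial. For a degree $N$ to be fixed later, take nodes $k/N \in \mathbb{Q}_{S_n}$ and, invoking the effective approximation Axiom~\ref{ax:approximation}, choose $q_k \in \mathbb{Q}_{S_n}$ with $|q_k - g(k/N)| < \eta$ for an $\mathcal{F}_n$-definable tolerance $\eta$. Define
\[
p(x) := \sum_{k=0}^{N} q_k \binom{N}{k} x^k (1-x)^{N-k} \in \mathbb{Q}_{S_n}[x],
\]
whose expanded coefficients lie in $\mathbb{Q}_{S_n}$ by closure of $\mathbb{Q}_{S_n}$ under the field operations (Axiom~\ref{ax:closure-arithmetic}), and whose construction is manifestly $\mathcal{F}_n$-definable. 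Pulling back through $\phi$ yields the required $p_n \in \mathbb{Q}_{S_n}[x]$.

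For the estimate I would split $\|g - p\|_\infty \le \|g - B_N g\|_\infty + \|B_N g - p\|_\infty$, where $B_N g$ is the exact Bernstein polynomial of $g$. The second term is bounded by $\eta$ because the Bernstein basis is a partition of unity, $\sum_{k} \binom{N}{k}x^k(1-x)^{N-k} = 1$. For the first term I would carry out the standard Bernstein argument effectively: writing $g(x) - B_N g(x) = \sum_{k}(g(x)-g(k/N))\binom{N}{k}x^k(1-x)^{N-k}$, split the sum according to whether $|x - k/N| < \Delta_n$ or not, bound the near terms by $\omega_g(\Delta_n)$, and control the far terms using the binomial variance identity $\sum_{k}(x-k/N)^2\binom{N}{k}x^k(1-x)^{N-k} = x(1-x)/N \le 1/(4N)$ together with a Chebyshev-type estimate. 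Choosing $\Delta_n := 1/\sqrt{N}$ gives the classical bound $\|g - B_N g\|_\infty \le \tfrac{3}{2}\,\omega_g(\Delta_n)$, and taking $\eta \le \omega_g(\Delta_n)$ yields $\|g - p\|_\infty \le C\,\omega_g(\Delta_n)$ for an absolute constant $C$. Rescaling through $\phi$ absorbs the factor $(b-a)$ into a constant $C_n$ depending only on $\mathcal{T}_n$ via the interval length, giving $\|f - p_n\|_\infty \le C_n\,\omega_f(\Delta_n)$.

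The main obstacle is effectivity rather than the inequality itself. Two points require care. First, the hypothesis is bare $\mathcal{T}_n$-continuity whereas the conclusion quantifies a modulus; the passage from topological continuity on the effectively compact $[a,b]$ to an $\mathcal{F}_n$-definable modulus is a constructive Heine--Cantor step, which I would justify exactly as the effective modulus was used in the proof of Theorem~\ref{thm:min-attainment}. Second, $\|\cdot\|_\infty$ is a constructive supremum over $[a,b]\cap\mathbb{R}_{S_n}$; I avoid computing it exactly by noting that the Bernstein bound is a \emph{uniform pointwise} inequality valid at every real point of $[0,1]$, hence a fortiori at every $S_n$-point, so the supremum is dominated by $C\,\omega_g(\Delta_n)$ with no further search. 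All remaining data --- the nodes $k/N$, the rational values $q_k$, the tolerance $\eta$, and the choice $\Delta_n = 1/\sqrt{N}$ --- are primitive-recursively generated and thus definable within $\mathcal{F}_n$.
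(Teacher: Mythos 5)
The paper states this theorem without any proof, so there is nothing to compare your argument against; judged on its own terms, your Bernstein-polynomial construction is the natural constructive route and most of it goes through: the nodes, the rationalized sample values $q_k$ (via Axiom~\ref{ax:approximation}), the partition-of-unity bound on $\|B_N g - p\|_\infty$, the variance/Chebyshev estimate with $\Delta_n = 1/\sqrt{N}$ (better taken as $N = M^2$ so that $\Delta_n \in \mathbb{Q}_{S_n}$), and the observation that the final bound is pointwise at every $x \in [0,1]\cap\mathbb{R}_{S_n}$ and so needs no constructive supremum search. These are all sound and stay within $\mathcal{F}_n$.

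The genuine gap is the very first step, which you flag but do not close: passing from the stated hypothesis of bare $\mathcal{T}_n$-continuity to an $\mathcal{F}_n$-definable modulus $\omega_g$. You defer this to ``a constructive Heine--Cantor step, justified exactly as in the proof of Theorem~\ref{thm:min-attainment},'' but that theorem does not derive a modulus --- it \emph{assumes} effective continuity in the sense of Definition~\ref{def:effective-continuity-compact}, i.e.\ the modulus is a hypothesis there, not a conclusion. Constructively this implication is not free: over $\mathsf{RCA}_0$ (the paper's own candidate for $\mathcal{F}_0$), pointwise continuity on $[0,1]$ does not yield a modulus of uniform continuity, and even granting Definition~\ref{def:effectively-compact} for $[a,b]\cap\mathbb{R}_{S_n}$, the paper never establishes that closed intervals are effectively compact at level $n$, nor that topological continuity plus effective compactness produces a modulus \emph{definable in $\mathcal{F}_n$} rather than merely existing. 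Without that, the quantity $\omega_f(\Delta_n)$ in your final inequality is not an $\mathcal{F}_n$-object and the whole construction cannot be carried out inside $\mathcal{F}_n$. The honest fix is to strengthen the hypothesis to effective continuity on an effectively compact $[a,b]$ (as in Definition~\ref{def:effective-continuity-compact}); note that the theorem as printed shares this defect, since it invokes $\omega_f$ while assuming only topological continuity.
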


\subsection{Computable Analysis}

Classical computable analysis, especially in the Type-2 Effectivity (TTE) framework, formalizes real-number computation via oracle machines operating on fast-converging rational sequences. While precise, this model does not capture internal stratifications of definability or resource bounds. The stratified framework introduced here refines computability by embedding it within a layered hierarchy of formal systems.

In this setting, functions are not only computable, but \emph{stratified computable}—their action on real arguments respects definability levels and complexity constraints. This enables constructive control over approximation depth, computational cost, and definitional transparency.

\begin{definition}[Stratified Type-2 Computability]
A function \( f \colon \mathbb{R}_{S_n} \to \mathbb{R}_{S_m} \) is called \emph{stratified computable} if:
\begin{itemize}
    \item It maps every \( \mathcal{F}_n \)-definable Cauchy sequence \( (x_k) \) with limit in \( \mathbb{R}_{S_n} \) to a Cauchy sequence \( (f(x_k)) \) converging in \( \mathbb{R}_{S_m} \);
    \item The transformation \( x_k \mapsto f(x_k) \) is definable in \( \mathcal{F}_{\max(n,m)} \).
\end{itemize}
\end{definition}

\subsection{Algorithmic Mathematics}

Many classical problems in theoretical computer science — including optimization, verification, and approximation — require not just the construction of solutions, but the identification of syntactic or semantic gaps that separate approximate solutions from exact ones. In the classical PCP theorem, such gaps are verified through probabilistic local checks~\cite{Chaitin1975}. In the stratified framework of fractal definability, we observe a similar phenomenon: even when a problem is entirely definable within a level \( S_n \), the construction of gap witnesses or separating certificates typically requires moving to level \( S_{n+1} \).

This reflects a general syntactic asymmetry: the existence of a solution and the verification of its optimality (or suboptimality) may not reside in the same definability layer.

\begin{definition}[Fractal PCP Gap Principle]
Let \( \mathcal{P} \) be an optimization or decision problem whose objective function, constraints, and feasible set are all definable in \( \mathcal{F}_n \). We say that \( \mathcal{P} \) satisfies the \emph{Fractal PCP Gap Principle} if any minimal constructive witness of a nonzero approximation gap lies in:
\[
S_{n+1} \setminus S_n.
\]
\end{definition}

\begin{theorem}[Fractal PCP Gap Theorem]
\label{thm:pcp-gap}
Let \( \mathcal{P} \) be an optimization problem defined over \( \mathbb{R}_{S_n}^d \), with \( \mathcal{F}_n \)-definable objective function \( f \colon \mathbb{R}_{S_n}^d \to \mathbb{R}_{S_n} \) and feasible region \( D \subseteq \mathbb{R}_{S_n}^d \). Suppose \( x^* \in D \) is a suboptimal point such that \( f(x^*) + \delta < \inf f(D) \) for some \( \delta \in \mathbb{Q}_{S_n}^+ \). Then there exists a syntactic certificate \( w \in S_{n+1} \setminus S_n \) verifying the existence of this gap:
\[
f(x^*) + \delta < f(y), \quad \text{for all } y \in D, \text{ witnessed via } w.
\]
\end{theorem}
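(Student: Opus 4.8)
The plan is to produce the certificate $w$ in three stages: first extract the exact optimum of $\mathcal{P}$ via the effective minimum principle, then package the convergence data witnessing the separation into a single syntactic object, and finally locate that object precisely one level up in the hierarchy. To begin, I would invoke Theorem~\ref{thm:min-attainment}: since $D$ is $\mathcal{F}_n$-definable and, under the standing compactness hypothesis, effectively compact, and $f$ is $\mathcal{F}_n$-definable and effectively continuous, there is a minimizer carrying an $\mathcal{F}_n$-definable rational approximation $g \colon \mathbb{N} \to \mathbb{Q}_{S_n}$ of the optimal value $m := \inf f(D)$, with $|m - g(k)| < 2^{-k}$. Since $x^* \in D$ forces $f(x^*) \ge m$, the displayed gap is read as the suboptimality separation $f(x^*) - m > \delta$, and the target sentence to be certified is the strict lower bound $\Phi \equiv \forall y \in D\,\bigl(f(y) \ge m > f(x^*) - \delta\bigr)$.

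Next I would assemble $w$ as the finite syntactic packet consisting of the minimizing sequence $k \mapsto x_k$ produced inside the proof of Theorem~\ref{thm:min-attainment}, the modulus of continuity $\delta(\cdot)$ of $f$, and the rational gap $\delta \in \mathbb{Q}_{S_n}^+$. The content of $w$ is exactly a proof of $\Phi$: the universal quantifier over $D$ reduces, via the modulus and a $\delta(\varepsilon)$-net supplied by effective compactness, to finitely many rational comparisons against the limit value $m$, so $\Phi$ has the form of a $\Pi^0_1$-assertion over the $\mathcal{F}_n$-definable net data.

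The upper bound $w \in S_{n+1}$ is the routine direction. The optimum $m$ is a limit of an $\mathcal{F}_n$-definable sequence, hence $\mathcal{F}_{n+1}$-definable by the Calculus Closure Axiom~\ref{ax:calculus-closure}; and by the Conservative Hierarchy Axiom~\ref{ax:hierarchy-formal-systems}, $\mathcal{F}_{n+1}$ admits uniform definitions for the $\Delta^0_{n+1}$ functions needed to evaluate $f$ along the net and to decide the strict inequalities against $m$. Thus $\Phi$ is expressible and provable in $\mathcal{F}_{n+1}$, so its witnessing packet $w$ is internally representable in $\mathcal{F}_{n+1}$, placing $w \in S_{n+1}$.

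The hard part will be the \emph{lower} bound $w \notin S_n$, i.e.\ that no $\mathcal{F}_n$-certificate suffices. I would argue by a diagonal separation: an $\mathcal{F}_n$-definable witness for $\Phi$ would internalize the universal bound $m$ as a total $\mathcal{F}_n$-decision procedure for the strict lower bound $f(x^*) - \delta < m$, thereby collapsing into level $n$ the limit operation that defines $m$, and contradicting Hierarchy Strictness (Axiom~\ref{ax:hierarchy-strictness}). The genuine subtlety — and the point I expect to be most delicate — is that for \emph{trivial} instances the gap is already certifiable in $\mathcal{F}_n$, so the escape to $S_{n+1}\setminus S_n$ cannot follow from the mere existence of a certificate; it must be forced by taking $w$ to be the \emph{minimal} constructive witness in the sense of the Fractal PCP Gap Principle. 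I would therefore close by coding the infimum datum $m$ into a real of $\mathbb{R}_{S_{n+1}} \setminus \mathbb{R}_{S_n}$ guaranteed by Axiom~\ref{ax:hierarchy-strictness}, so that any decoding of the minimal $w$ recovers an object of genuine level $n+1$, establishing $w \in S_{n+1}\setminus S_n$ as claimed.
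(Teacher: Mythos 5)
First, a point of comparison: the paper offers no proof of Theorem~\ref{thm:pcp-gap} at all --- it is followed only by interpretive remarks and a worked example --- so your proposal cannot be measured against an official argument. You have also (reasonably, but silently) repaired the statement: as written, $x^* \in D$ together with $f(x^*) + \delta < \inf f(D)$ for $\delta > 0$ is contradictory, and the displayed conclusion with $y = x^*$ forces $\delta < 0$. Your re-reading as $f(x^*) > \inf f(D) + \delta$ is the only sensible one, but you should say explicitly that you are proving a corrected statement, and that you are importing hypotheses (effective compactness of $D$, effective continuity of $f$) that the theorem does not contain.

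The genuine gap is the lower bound $w \notin S_n$, and your own setup works against you there. Under the hypotheses you add in order to invoke Theorem~\ref{thm:min-attainment}, that theorem hands you a minimizer $x^\dagger$ with an $\mathcal{F}_n$-definable approximation $g \colon \mathbb{N} \to \mathbb{Q}_{S_n}$ satisfying $|x^\dagger - g(k)| < 2^{-k}$; by Definition~\ref{def:fractal-reals} this places $x^\dagger$, and hence $m = f(x^\dagger) \in \mathbb{R}_{S_n}$, squarely at level $n$. So the ``infimum datum'' you want to push up to level $n{+}1$ is already an $S_n$-object, and nothing forces the comparison $f(x^*) - \delta < m$ out of $\mathcal{F}_n$. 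Your appeal to Axiom~\ref{ax:hierarchy-strictness} does not close this: that axiom asserts the \emph{existence} of some real in $\mathbb{R}_{S_{n+1}} \setminus \mathbb{R}_{S_n}$, not that $m$ or any particular certificate is such a real, and ``coding the infimum datum into a real of $\mathbb{R}_{S_{n+1}} \setminus \mathbb{R}_{S_n}$'' is circular --- artificially inflating a witness that already exists at level $n$ both violates the minimality you invoke and fails to show that \emph{no} $S_n$-witness exists, which is what $w \in S_{n+1} \setminus S_n$ requires. You half-concede this when you note that trivial instances are certifiable in $\mathcal{F}_n$; that concession is in fact a counterexample to the theorem as stated, and no amount of repackaging the same data as a ``minimal'' witness repairs it. The upper-bound half of your argument ($w \in S_{n+1}$) is fine, but the separation claim would need either additional hypotheses singling out instances whose optimal value genuinely lives in $\mathbb{R}_{S_{n+1}} \setminus \mathbb{R}_{S_n}$, or a concrete diagonal construction --- neither of which the paper supplies either.
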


\begin{remark}
This ascent from \( S_n \) to \( S_{n+1} \) reflects the increase in definitional complexity required to syntactically isolate error bounds, separating hyperplanes, or infeasibility proofs. The principle provides a constructive analogue of classical duality gaps and hardness-of-approximation boundaries, but within a finitely stratified framework.
\end{remark}

\begin{remark}[On the Role of the Witness \( w \)]
In Theorem~\ref{thm:pcp-gap}, the expression ``witnessed via \( w \in S_{n+1} \setminus S_n \)'' refers to the fact that the existence of an approximation gap cannot be established within \( \mathcal{F}_n \), but becomes syntactically provable or verifiable in \( \mathcal{F}_{n+1} \). 

Formally, \( w \) may represent:
\begin{itemize}
    \item a concrete real number, inequality, or bound (e.g., a rational interval or separation constant) not definable in \( \mathcal{F}_n \), but constructible in \( \mathcal{F}_{n+1} \);
    \item a derivation or proof tree (e.g., in bounded arithmetic or a formal system) that confirms the suboptimality of a given solution;
    \item an effective separator (e.g., dual certificate, Lagrangian multiplier, polynomial lower bound) definable only at level \( S_{n+1} \).
\end{itemize}

Thus, \( w \) is not just a numerical value but a syntactic object — a definable expression or procedure — that encodes the minimal constructive evidence of a nonzero approximation gap. Its non-membership in \( S_n \) signifies that the existence of the gap cannot be validated within the original definability layer.
\end{remark}

\begin{example}[Fractal Gap Certificate in a Simple Optimization Problem]
Let \( f(x) = x^2 - 2x + 1 \), defined over the interval \( [0,1] \cap \mathbb{Q}_{S_2} \). The minimum is attained at \( x = 1 \), where \( f(1) = 0 \). Consider the candidate point \( x_0 = 0.9 \in \mathbb{Q}_{S_2} \). Then:
\[
f(x_0) = (0.9)^2 - 2(0.9) + 1 = 0.81 - 1.8 + 1 = 0.01.
\]

To confirm suboptimality of \( x_0 \), it is not sufficient to compare \( f(x_0) \) with the unknown minimum value; instead, we must show that:
\[
\forall y \in [0,1] \cap \mathbb{Q}_{S_2}, \quad f(y) \geq f(x_0).
\]
However, such a universal comparison requires evaluation of \( f(y) \) for arbitrary \( y \in \mathbb{Q}_{S_2} \) with sufficient precision.

\paragraph{Need for \( \mathcal{F}_3 \)-witness.}
To construct a witness \( w \in S_3 \setminus S_2 \) confirming suboptimality of \( x_0 \), we require:
\[
\exists \delta > 0 \in \mathbb{Q}_{S_3} \quad \text{such that} \quad \forall y \in \mathbb{Q}_{S_2} \cap [0,1], \quad f(y) \geq 0.009.
\]
This lower bound (strictly below \( f(x_0) = 0.01 \)) cannot be verified within \( \mathcal{F}_2 \) due to limited approximation power — e.g., \( \mathcal{F}_2 \) cannot uniformly resolve values of \( f(y) \) to within \( 10^{-3} \) precision. The witness \( w = 0.009 \in \mathbb{Q}_{S_3} \) serves as a certificate of the gap.

\paragraph{Conclusion.}
The existence of such \( w \) illustrates the Fractal PCP Gap Principle: although the function \( f \) and candidate point \( x_0 \) are definable in \( \mathcal{F}_2 \), the syntactic verification of their suboptimality requires ascent to \( \mathcal{F}_3 \).
\end{example}

\section{Conclusion}
\label{sec:conclusion}

This work introduces a stratified axiomatic framework for real analysis and number theory grounded in computable definability. The approach constructs a hierarchy of formal systems \( \mathcal{F}_n \), each generating a definability layer \( \mathbb{R}_{S_n} \subseteq \mathbb{R} \), culminating in the union \( \mathbb{R}_{S_\omega} \)—a constructive, countable, and topologically rich continuum.

\subsection*{Summary of Contributions}

\begin{itemize}
    \item \textbf{Fractal Definability Framework}:
    \begin{itemize}
        \item Formal axiomatization of countably stratified systems \( \mathcal{F}_n \), inducing definability layers \( S_n \).
        \item Introduction of fractal topology \( \mathcal{T}_n \), arithmetic closure, calculus, and measure at each level.
        \item Consistency with and interpretability in known systems such as \( \mathsf{RCA}_0 \), \( \mathsf{ACA}_0 \), and beyond.
    \end{itemize}

    \item \textbf{Layered Number Classes}:
    \[
    \mathbb{Q}_{S_n} \subsetneq \mathbb{A}_{S_n} \subsetneq \mathbb{T}_{S_n} \subsetneq \mathbb{R}_{S_{n+1}}
    \]
    Each layer distinguishes rational, algebraic, transcendental, and non-definable real numbers by their computational accessibility. The total union \( \mathbb{R}_{S_\omega} \) forms the fractal continuum.

    \item \textbf{Constructive Universality}:
    The framework does not fix a particular number system, but rather \emph{generates families of constructive continua} by parametrizing definability, complexity, and convergence. As shown in Section~\ref{sec:complexity}, it supports the construction of arbitrary computable number systems—e.g., real numbers based on Cantor-set encodings, bounded approximations, or transfinite schemes. All axioms and theorems remain valid over any such construction, provided it is expressible within some \( \mathcal{F}_n \)~\cite{Semenov2025FractalAnalysis}.
\end{itemize}

\subsection*{Boundary and Non-Constructivity}

\begin{theorem}[Definability Boundary Theorem]
Let \( x \in \mathbb{R} \setminus \mathbb{R}_{S_\omega} \). Then:
\[
\forall n \in \mathbb{N},\quad x \notin \mathbb{R}_{S_n}, \quad \text{yet } \mathsf{ZFC} \vdash \exists x.
\]
\end{theorem}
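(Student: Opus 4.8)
The plan is to separate the statement into its two conjuncts, which are of entirely different character: the first is a definitional unfolding, while the second is a cardinality (or diagonalization) argument carried out inside $\mathsf{ZFC}$. For the first conjunct, recall that $\mathbb{R}_{S_\omega} = \bigcup_{n \in \mathbb{N}} \mathbb{R}_{S_n}$ by definition of the fractal continuum. The hypothesis $x \in \mathbb{R} \setminus \mathbb{R}_{S_\omega}$ then says exactly $x \notin \bigcup_n \mathbb{R}_{S_n}$, and by the meaning of the union this is precisely $\forall n\,(x \notin \mathbb{R}_{S_n})$. Hence this half requires no work beyond unwinding the definition, and the genuine content of the theorem resides entirely in the existence clause.

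For the existence clause, I would argue by cardinality. First invoke the Fractal Countability Theorem (\Cref{thm:countability}), which yields $|\mathbb{R}_{S_\omega}| = \aleph_0$. The essential preliminary is to verify that $\mathbb{R}_{S_\omega}$ is a bona fide set definable in the language of set theory: by \Cref{ax:hierarchy-formal-systems}, each $\mathcal{F}_n$ has a recursively enumerable syntax, so the relation ``the real $r$ is defined by the $\mathcal{F}_n$-construction coded by $m$'' is arithmetically definable, and therefore $\mathbb{R}_{S_\omega}$ is a definable countable subset of $\mathbb{R}$ within $\mathsf{ZFC}$. Cantor's theorem, provable in $\mathsf{ZFC}$, then gives $|\mathbb{R}| = 2^{\aleph_0} > \aleph_0$, whence $\mathbb{R} \setminus \mathbb{R}_{S_\omega} \neq \varnothing$; this is exactly $\mathsf{ZFC} \vdash \exists x\,(x \in \mathbb{R} \setminus \mathbb{R}_{S_\omega})$. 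For an explicit witness I would instead fix a $\mathsf{ZFC}$-definable enumeration $\{r_k\}_{k \in \mathbb{N}}$ of $\mathbb{R}_{S_\omega}$ and diagonalize, defining $x \in [0,1]$ so that its $k$-th decimal digit differs from that of $r_k$ (choosing digits away from $0$ and $9$ to avoid dual representations); this $x$ provably differs from every $r_k$ and so lies outside $\mathbb{R}_{S_\omega}$.

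The main obstacle is not the mathematics but the formalization: one must confirm that the informal predicate ``definable in $\mathcal{F}_n$'' can be rendered as a legitimate set-theoretic formula, so that $\mathbb{R}_{S_\omega}$ exists as a set and its countability is provable \emph{inside} $\mathsf{ZFC}$ rather than merely in the metatheory. Granting the recursive presentation of the syntax of each $\mathcal{F}_n$ assumed throughout the framework, this reflection step is routine, and the conceptual upshot is simply that $\mathsf{ZFC}$ perceives a nonempty boundary $\mathbb{R} \setminus \mathbb{R}_{S_\omega}$ that no single finite definability layer—indeed not even their union—can reach.
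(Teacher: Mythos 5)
Your argument is correct, but it is worth noting that the paper itself states the Definability Boundary Theorem without any proof: the only justification offered is the surrounding prose, which points to concrete non-constructive witnesses (the Vitali-type supremum $x_V$ and Chaitin's $\Omega$ from \Cref{ex:nonconstructive}) as inhabitants of $\mathbb{R} \setminus \mathbb{R}_{S_\omega}$. Your route is genuinely different: you treat the first conjunct as a definitional unfolding of $\mathbb{R}_{S_\omega} = \bigcup_n \mathbb{R}_{S_n}$ (which is all it is), and you derive the existence clause from \Cref{thm:countability} together with Cantor's theorem, supplemented by an explicit diagonal witness. This cardinality argument is more robust than the paper's implicit one, since it does not require verifying case by case that a particular real (e.g.\ $\Omega$, whose exclusion from every $S_n$ the paper asserts but does not prove) actually escapes all layers; the paper's examples, conversely, buy concreteness and a measure-theoretic flavour that the diagonal real lacks. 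You are also right to flag the formalization step as the real issue: since the $\mathcal{F}_n$ are only axiomatically characterized, ``$x \in \mathbb{R}_{S_n}$'' must first be rendered as a set-theoretic predicate (via the recursively enumerable syntax of $\mathcal{F}_n$ and the interpretability claims of \Cref{thm:interpretability}) before $\mathsf{ZFC} \vdash \exists x$ is even meaningful, and your argument correctly isolates this as the only non-routine point. One minor refinement: the appeal to ``a countable union of countable sets is countable'' should be discharged, as you implicitly do, by the uniform definable enumeration of each $\mathbb{R}_{S_n}$ coming from the coded syntax, so that no choice principle is needed.
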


Non-constructive reals—such as those arising from Vitali sets~\cite{Vitali1905}, non-measurable selectors~\cite{BanachTarski1924}, or Chaitin's constant~\cite{Chaitin1975}—act as horizons for finitely grounded mathematics.

\begin{itemize}
    \item \textbf{Foundational Role:} They delimit the boundaries of provability, continuity, and definability.
    \item \textbf{Computational Interpretation:} Their absence in \( \mathbb{R}_{S_\omega} \) corresponds to the inaccessibility of exact computation or approximation.
    \item \textbf{Physical Insight:} Measurable quantities in physical systems are plausibly always representable in \( \mathbb{R}_{S_n} \) for some finite \( n \).
\end{itemize}

\begin{center}
\begin{tikzpicture}
\draw[->] (0,0) -- (10,0) node[right] {Definability Complexity};
\node[below left] at (0,0) {\small simple};
\foreach \x/\l in {1/$\mathbb{R}_{S_0}$,3/$\mathbb{R}_{S_1}$,5/$\mathbb{R}_{S_2}$,7.5/$\mathbb{R}_{S_\omega}$,9.3/$\mathbb{R}$}
    \draw (\x,0.1) -- (\x,-0.1) node[below] {\l};
\draw[red, thick] (7.6,-0.2) -- (7.6,0.2) node[above] {\small Constructive Limit};
\end{tikzpicture}
\end{center}

\paragraph{Final Reflection.}
The fractal continuum \( \mathbb{R}_{S_\omega} \) is not a limitation of classical analysis, but rather its computational refinement. Within this universe:
\begin{itemize}
    \item Proofs are syntactic derivations and algorithms;
    \item Theorems correspond to verifiable constructions;
    \item Continuity, integrability, and differentiability are grounded in definability;
    \item New constructive analogues of classical results—such as Hahn–Banach, \\Stone–Weierstrass, or Tietze—can be formulated layer by layer.
\end{itemize}

This framework serves as both a unifying foundation for constructive mathematics and a generative tool for building new mathematical worlds grounded in definitional precision and computational realism.


\end{document}